\documentclass[a4paper,10pt]{article}
\usepackage[utf8]{inputenc}
\usepackage{amsmath,amssymb,amsthm}
\newtheorem{theorem}{Theorem}
\newtheorem{lemma}{Lemma}
\newtheorem{proposition}{Proposition}

\newtheorem{assumption}{Assumption}
\newtheorem{corollary}{Corollary}
\newtheorem{remark}{Remark}  

\newcommand{\R}{\mathbb{R}}

\renewcommand{\div}{{\rm div}}

\newcommand{\ltov}{L^2(\Omega)^n}

\newcommand{\ZD}{V_D}

\newcommand{\KK}{T} 
\newcommand{\low}{\leq_L} 
\newcommand{\gow}{\geq_L} 
\newcommand{\ai}{\underline{\alpha}}
\newcommand{\aii}{\overline{\alpha}}
\newcommand{\thet}{\theta_\eta}

\title{On the tangential cone condition for electrical impedance tomography}
\author{Stefan Kindermann\thanks{Industrial Mathematics Institute, Johannes Kepler University Linz, Austria. kindermann{@}indmath.uni-linz.ac.at.}}
\begin{document}
\maketitle
\begin{abstract}
We state some sufficient criteria for the tangential cone conditions to hold for the 
electrical impedance tomography problem. The results are based on an estimate 
for the first-order Taylor residual for the forward operator and some convexity
results in the L\"owner order. As a consequence, for conductivities satisfying
certain monotonicity properties, the tangential cone condition is verified.  
\end{abstract} 
\section{Introduction} 
The electrical impedance tomography problem is a classical inverse problem, 
where the aim is to extract information about the conductivity 
from boundary measurements of current/voltage pairs. 
Starting with  the definition of the problem in the 
seminal paper of Calderon \cite{Ca80}, it has been investigated in various direction 
and now serves as   paradigmatic instance of a parameter identification 
problem from boundary measurements.  

The common  mathematical formulation is to consider 
solutions of the boundary value problem on a Lipschitz domain $\Omega$, 
\begin{equation}\label{maineq}
\begin{split} 
\div (\gamma \nabla u) = 0 \quad \text{in } \Omega, \qquad 
u = f  \quad \text{ on } \Omega. 
\end{split}
\end{equation} 
The data for the inverse problem are multiple or infinitely many 
pairs of Cauchy-data $(f, \gamma \frac{\partial}{\partial n} u|_{\partial \Omega})$
on the boundary, and the interest is to  recover the conductivity $\gamma(x)$ in the 
interior $\Omega$. 
As it is typical for such 
identification problem with only boundary data, this leads under usual circumstances 
to a nonlinear severely  ill-posed problem, and without strong restrictions on the conductivity, 
one can at best only expect  conditional logarithmic stability \cite{Is98}. 
Various classical uniqueness and stability results are collected, e.g.,  in
\cite{Bo02} or  \cite{Is98}.

In the following, we assume that the unknown conductivity can be written as 
a perturbation $ \delta \gamma$ of a known background, which we take 
without loss of generality as $1$. Thus, we assume throughout that
\begin{equation}\label{ellip}
 \gamma(x) = 1 + \delta \gamma(x), \qquad \ai \leq \gamma(x) \leq \aii, \quad \text{a.e. in } \Omega, \end{equation}
with positive constants $\ai,\aii$ to ensure ellipticity and stability of the 
partial differential equation.  
%We will frequently switch in our notation between the 
%conductivity $\gamma$ or the corresponding deviation $\gamma$. 
 
In order to solve the problem, it is convenient to frame it into operator-theoretic 
language.  The above-mentioned Cauchy-data  (in the case of complete data),  are equivalent to 
the knowledge of the Dirichlet-to-Neumann operator.
  Thus, we
   introduce the parameter-to-data map:
\begin{equation}\label{defF} 
 F(\gamma):=   \Lambda_{\gamma} - \Lambda_1,  \end{equation} 
where $\Lambda_\gamma$ is the Dirichlet-to-Neumann map  for 
\eqref{maineq}. Solving the inverse problem is then equivalent to 
 inverting $F$.  We give a precise definition of  the associate spaces $X,Y$
in  $F:X\to Y$   in the next section. 
 
The main theme of this article concerns not  the solution of this problem but  
the investigation of the nonlinearity of $F$. This is highly relevant 
when applying regularization methods, in particular, iterative ones. 
Indeed, the convergence theory of iterative regularization methods 
such as Landweber's method requires some  restrictions on the nonlinearity
that quantify the deviation of the problem from a linear one.  
In this work, we focus on the well-known tangential cone 
conditions and of its variants (cf.~\cite{S95,HNS95,Ki17}). 

For a general inverse problem with a differentiable parameter-to-data map $F$ between Hilbert spaces, 
the  so-called strong tangential cone condition \cite{S95,HNS95} is
satisfied if, with  $F'$ denoting
the Fr\'{e}chet-derivative of $F$,  there is an $\eta$,  
 $1 > \eta>0$, such that 
\begin{equation}
\label{stc} 
 \|F(\tilde{x} ) - F(x) - F'(x) (\tilde{x} - x) \| \leq \eta \|F(\tilde{x}) - F(x)\|
\tag{stc}  
 \end{equation}  
holds for all  $\tilde{x},x$ in a neighborhood of some $x_0$. 
A weaker version, the weak-tangential cone conditions \cite{S95} holds if 
an $\eta$, $1 > \eta>0$, exists such that 
\begin{equation}
\label{wtc} 
 \left( F(\tilde{x} ) - F(x) - F'(x) (\tilde{x} - x), 
 F(\tilde{x}) - F(x) \right)_Y 
  \leq \eta \|F(\tilde{x}) - F(x)\|^2. 
\tag{wtc}  
 \end{equation}  
Moreover, the weak tangential cone condition with $\eta =1$ reads as 
    \begin{equation}
\label{qcon} 
 \left(  F'(x) (\tilde{x} - x), 
 F(\tilde{x}) - F(x) \right)_Y
   \geq 0,  
\tag{qcon}  
 \end{equation}
which yields a weaker condition than \eqref{wtc} that has been proposed in \cite{Ki17}. Note that by the
 parallelogram identity, the 
 inequalities \eqref{wtc} and \eqref{qcon} 
may be equivalently rewritten as 
\begin{equation} \label{para}
\begin{split} 
&\|F(\tilde{x} ) - F(x) - F'(x) (\tilde{x} - x)\|^2\\
 &  \qquad  \leq  (2 \eta -1) \|F(\tilde{x}) - F(x)\|^2 +  \|F'(x) (\tilde{x} - x)\|^2. 
 \end{split} 
 \end{equation}

These inequalities  are central to the convergence theory of the nonlinear Landweber method and 
many other iterative regularization methods. They are a replacement to  coercivity estimates, 
which cannot exist in the ill-posed case. 
It is a classical result that (under some standard additional assumptions)  
the strong tangential cone condition with $\eta \leq \frac{1}{2}$
implies strong convergence of the Landweber method
\cite{HNS95,KaNeSc08}.
Similarly, the weak-tangential cone condition   \cite{S95}
imply nonexpansivity of the iteration, and in particular, weak (subsequential) convergence of the iterates. (Of course, all this in connection with parameter choice rules.)
The condition \eqref{qcon} imply that the iterates stay in a neighborhood of the solution, 
which also yields weak (subsequential) convergence \cite{Ki17}. 

It is surprising that in view of its importance,  the tangential cone conditions for the impedance tomography problem
could only be verified in a few especial cases. For instance, Lechleitner and 
Rieder \cite{LeRi08} 
have proven \eqref{stc} in a semidiscrete case, 
essentially by 
using a stability result for the discrete problem. Interestingly, de Hoop, Liu and 
Scherzer \cite{HQS12}
have proven \eqref{stc} for a class of piecewise constant conductivities being constant on finitely many 
regions. 
  The proof is based on 
a Lipschitz stability result of Alessandrini and Vessella~\cite{AV05}.  In both cases, the 
stability constants might get quite large, thus, the cone conditions can in practice only 
be theoretically verified in a very narrow neighborhood.

Except from these few cases, the validity of the above tangential cone conditions 
is completely open, which is extremely puzzling given the fact that 
the Landweber method has successfully been applied 
to the impedance tomography problem in many situations, and only inveterate sceptics would  
doubt  its convergence. 

Our article aims to gain further understanding of this puzzle (though without completely resolving it) 
 by analyzing and establishing 
sufficient conditions for the tangential cone conditions.
The main contribution is that a condition of the form 
\[ \|F'[\gamma^\dagger] (\gamma-\gamma^\dagger)^2\| \leq C
 \|F'[\gamma^\dagger] (\gamma-\gamma^\dagger)\| \]
 suffices; see~\eqref{mjmi}. This is established by a useful 
 bound for the remainder in a first-order Taylor expansion in \eqref{mainest1}. 

 One result that is
probably most relevant in practice is that the tangential cone conditions are 
satisfied for conductivities that satisfy certain monotonicity properties
(e.g., a purely positive perturbation of the background conductivity).  
A central tool in this paper,  which might be of more general interest,
 is  a convexity result of the parameter-to-solution 
map in the L\"owner ordering given in Theorems~\ref{main1} and \ref{main2}.

\section{Problem setup and operator estimates}
We formulate some standard assumption and specify the notation. 
We make use of the space of $L^2$-vector fields: 
\[  \ltov:= \{v:\Omega \to \R^n\,|\, \int_\Omega |v(x)|^2 < \infty \}. \]
The Dirichlet data $f$ are canonically chosen in $H^\frac{1}{2}(\partial \Omega)$, and 
thus, the associated boundary value problem
\[ \Delta u_{1,f} = 0 \text{ in }\Omega, \qquad  u_{1,f} = f \quad \text{on }\partial \Omega \]
has a unique solution in $H^1(\Omega)$. Moreover, by the Poincar\'{e} inequality, 
the mapping $f \to \nabla u_{1,f}$ is an isomorphism from  
$H^\frac{1}{2}(\partial \Omega) \to \ltov$, such that we may define 
the $H^\frac{1}{2}(\partial \Omega)$-norm in this paper as 
\[ \|f\|_{H^\frac{1}{2}(\partial \Omega)} := \|\nabla u_{1,f}\|_{\ltov}. \]
Moreover, we define  $u_{\gamma,f}$ as the solution in $H^1(\Omega)$ of the problem \eqref{maineq} 
\[ \div (\gamma \nabla  u_{\gamma,f})= 0 \text{ in }\Omega, \qquad  u_{\gamma,f} = f \quad \text{on }\partial \Omega. \]

For a conductivity $\gamma$ satisfying \eqref{ellip}, 
the Dirichlet-to-Neumann operator $\Lambda_{\gamma}$
 maps a $f \in H^\frac{1}{2}(\partial \Omega)$ to 
$\gamma \frac{\partial}{\partial n} u_{\gamma,f} \in H^{-\frac{1}{2}}(\partial \Omega)$ continuously.
It is well-known that the Dirichlet-to-Neumann operators can be rewritten in terms of 
energy integrals: Defining $F$ by \eqref{defF}, we have 
 for $\gamma_1,\gamma_2$ satisfying \eqref{ellip} that (cf.~\cite{Is98}) 
\begin{align}\label{defFF}  F(\gamma_1) - F(\gamma_2) &= \Lambda_{\gamma_1} - \Lambda_{\gamma_2} \qquad \text{ with} \\
\langle \Lambda_{\gamma_1} - \Lambda_{\gamma_2} f, g \rangle &= 
\int_{\Omega} (\gamma_1 - \gamma_2) \nabla u_{\gamma_1,f}.\nabla u_{\gamma_2,g} dx. 
\end{align} 
Here and in the following we denote by $\langle. , .\rangle$ the duality paring in 
$H^{-\frac{1}{2}} \times H^{\frac{1}{2}}$:
\[  \langle  g , f \rangle  = \langle g,f \rangle_{H^{-\frac{1}{2}}(\partial \Omega) ,H^{\frac{1}{2}}(\partial \Omega)}  . \]
Moreover, the mapping $F(\gamma)$ is Fr\'{e}chet-differentiable  with respect to the $L^\infty$-norm, 
and the derivative can be expressed as 
\begin{align} \label{defFp} F'[\gamma_1] w &= \Lambda_{\gamma_1}'(w) \\
 \langle \Lambda_{\gamma_1}'(w) f,g \rangle &= 
\int_{\Omega} w \nabla u_{\gamma_1,f}.\nabla u_{\gamma_1,g} dx,  \qquad w \in L^\infty(\Omega).
\end{align}

Our analysis is
based on the following assumptions, which we assume to hold for the rest of the article. 
\begin{assumption} 
\item We assume given a (finite or countably infinite) sequence of 
linear independent and orthogonal 
Dirichlet data $(f_i)_{i \in I}$, $f_i \in H^\frac{1}{2}(\partial \Omega)$. 
\item We set as domain of definition of the parameter-to-data map
\[ D(F) := \{ \gamma \in L^\infty(\Omega)\,|\, \ai \leq \gamma \leq \aii, 
\text{ and } \eqref{next} \text{ holds} \}, \] 
with 
\begin{equation}\label{next} 
  \sum_{i,j \in I} 
\left| \langle[\Lambda_{\gamma} - \Lambda_1] f_i,f_j\rangle
\right|^2 < \infty. 
\end{equation} 
\end{assumption}
The first assumption is not much of a restriction, as the Dirichlet data are part of the 
experimental design and can be chosen  orthogonal. Also the second one is 
not severe, as 
\eqref{next} holds if we only have finitely many Dirichlet data, or 
in case of infinitely many $f_i$ if the deviations from the background conductivity
$\delta \gamma$ have a common compact  support inside of $\Omega$. This is also a usual assumption 
in the impedance tomography problem. We note that we do not a priori require that the
$f_i$ form a complete basis in $H^\frac{1}{2}(\partial \Omega)$. Thus, much of our analysis 
is also valid in case of finitely many measurements or measurements on 
only a part of the boundary.

Associated to the set of Dirichlet data, we set 
\[ \ZD:= \text{span}(f_i)_{i \in I} \subset H^\frac{1}{2}(\partial \Omega). \] 
The quadratic form 
$\langle (\Lambda_{\gamma} - \Lambda_1)f, g\rangle$
associated to the Dirichlet-to-Neumann mapping defines 
a linear operator $\Lambda_{\gamma} - \Lambda_1: \ZD \to \ZD'.$
Hence, the parameter to solution map can be defined as a mapping in the following spaces
\begin{align*} 
 F: D(F) \subset X := L^\infty(\Omega) &\to  Y := L(\ZD,\ZD') \subset
  L(H^\frac{1}{2}(\partial \Omega), H^{-\frac{1}{2}}(\partial \Omega))
  \\
  \gamma &\to 
 \Lambda_{\gamma} - \Lambda_1
 \end{align*} 
with the norm 
\begin{align*} 
 \|F(\gamma)\|_Y^2:&=  \sum_{i,j \in I} 
\left| \langle[\Lambda_{\gamma} - \Lambda_1] f_i,f_j\rangle
\right|^2.
\end{align*}
Introducing the Riesz isomorphism 
 $\mathcal{I}: H^{-\frac{1}{2}} \to H^{\frac{1}{2}}$, we may write the norm 
 \begin{align*} 
 \|F(\gamma)\|_Y^2 =  \sum_{i,j \in I}  \left| \left( \mathcal{I}[\Lambda_{\gamma} - \Lambda_1] f_i,f_j\right)_{H^{\frac{1}{2}},H^{\frac{1}{2}}}
\right|^2  = \|\mathcal{I}[\Lambda_{\gamma} - \Lambda_1] \|_{HS(\ZD)}^2 
\end{align*}
as  Hilbert-Schmidt norm  of 
 $\mathcal{I} (\Lambda_{\gamma} - \Lambda_1)$ for the operator mapping between  the space $\ZD$.
Thus, the image space is equipped with a Hilbert space structure.    
 Note that 
 for selfadjoint compact operators, the 
Hilbert-Schmidt norm is the sum of squares of the eigenvalues. 
We will denote  by $\|.\|_{HS}$ the Hilbert-Schmidt norm, omitting the underlying 
space where the operators map,  and 
by $\|.\|_{2,L(X,Y)}$ the operator norm for a linear operator from $X\to Y$.  
Moreover, for functions in $L^\infty$, we set $\|.\|_\infty = \|.\|_{L^\infty}$.

For later use we also recall the L\"owner ordering for selfadjoint 
operators on a Hilbert space $H$: We have 
\[ A \low B \Leftrightarrow  (Ax,x)_H \leq (Bx,x)_H \qquad \forall x \in H. \]
A useful property of this ordering is that for Hilbert-Schmidt operators, 
\begin{equation} \label{HSineq}
 0 \leq  A \low B \Rightarrow  \|A\|_{HS} \leq \|B\|_{HS}. \end{equation} 
This follows from Weyl's inequality for the eigenvalues and since 
the L\"owner ordering implies a corresponding eigenvalue inequality. 
Moreover, the following result with an arbitrary bounded operator $T$ 
and $T^*$ its adjoint 
 will be used frequently:
\begin{equation}\label{mult}
A \low B \Rightarrow T^* A T \low  T^* B T.
\end{equation}

We define the operator $L(\gamma)^{-1}$ as the solution operator 
for \eqref{maineq} with homogeneous Dirichlet condition, i.e., 
for $\gamma$ satisfying \eqref{ellip}, we set 
\begin{align*} 
 L(\gamma)^{-1}: H^{-1}(\Omega) &\to H_0^{1}(\Omega) \\ 
h &\to  v, 
\end{align*} 
where $v$ is the solution of 
\[ \div(\gamma \nabla v) = h \quad  \text{in } \Omega
  \qquad v = 0 \quad \text{on } \partial \Omega. \] 
Furthermore, for a function $\kappa \in L^\infty(\Omega)$, we define  the multiplication operator 
\begin{equation}\label{defmulti} 
 H_{\kappa} : \ltov \to \ltov \qquad \vec{f}(x) \to \kappa(x) \vec{f}(x)  \end{equation}
and the  bounded operator
\[ \KK_\gamma : \ltov  \to \ltov \qquad \KK_{\gamma}:= \nabla L(\gamma)^{-1}\div. \]
With this notation we state  the following useful lemma:
\begin{lemma}\label{lemma1}
For any $\gamma_1,\gamma_2$ with $\|\frac{\gamma_1-\gamma_2}{\gamma_2}\|_{\infty}<1$ 
satisfying \eqref{ellip} 
 and any $f \in H^\frac{1}{2}(\partial \Omega)$,  we have 
%\[  u_{\gamma_1,f} = \left(I - L(\gamma_2)^{-1}\div\left(\left(\gamma_2-\gamma_1\right)\nabla \right) \right)^{-1}   u_{\gamma_2,f} \]
%and 
\[  \nabla u_{\gamma_1,f} = \left(I - \KK_{\gamma_2}H_{\gamma_2-\gamma_1}\right)^{-1}   \nabla 
u_{\gamma_2,f}, \]
\end{lemma}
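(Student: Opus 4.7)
The plan is in two steps: first, derive the identity by direct manipulation of the PDEs defining $u_{\gamma_1,f}$ and $u_{\gamma_2,f}$, and second, verify that the operator $I - \KK_{\gamma_2} H_{\gamma_2-\gamma_1}$ is boundedly invertible on $\ltov$. The hypothesis $\|(\gamma_1-\gamma_2)/\gamma_2\|_\infty < 1$ enters only in the second step.

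For the algebraic derivation, I would introduce the difference $w := u_{\gamma_1,f} - u_{\gamma_2,f}$. Since both summands equal $f$ on $\partial\Omega$, we have $w \in H_0^1(\Omega)$. Using $\div(\gamma_1 \nabla u_{\gamma_1,f}) = 0$ to swap the coefficient, I would compute
\[ \div(\gamma_2 \nabla w) = \div(\gamma_2 \nabla u_{\gamma_1,f}) - \div(\gamma_2 \nabla u_{\gamma_2,f}) = \div\bigl((\gamma_2-\gamma_1)\nabla u_{\gamma_1,f}\bigr). \]
Applying $\Li{\gamma_2}$, then $\nabla$, and recalling the definitions of $H_\kappa$ and $\KK_\gamma$ yields
\[ \nabla u_{\gamma_1,f} - \nabla u_{\gamma_2,f} = \nabla w = \KK_{\gamma_2} H_{\gamma_2-\gamma_1} \nabla u_{\gamma_1,f}, \]
which rearranges precisely to $(I - \KK_{\gamma_2} H_{\gamma_2-\gamma_1}) \nabla u_{\gamma_1,f} = \nabla u_{\gamma_2,f}$.

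The crux is the invertibility of $I - \KK_{\gamma_2} H_{\gamma_2-\gamma_1}$. A naive bound in the standard $L^2$-norm would involve $\|\KK_{\gamma_2}\|\cdot\|\gamma_2-\gamma_1\|_\infty$ and is too crude. Instead, I would pass to the equivalent weighted inner product $(v,w)_{\gamma_2} := \int_\Omega \gamma_2\, v\cdot w\,dx$ on $\ltov$. The key observation is that $\KK_{\gamma_2} H_{\gamma_2}$ is the $(\cdot,\cdot)_{\gamma_2}$-orthogonal projection onto $\nabla H_0^1(\Omega)$: for $\phi := \Li{\gamma_2}\div(\gamma_2 v)$ one reads off the variational identity $(\nabla\phi,\nabla\psi)_{\gamma_2} = (v,\nabla\psi)_{\gamma_2}$ for all $\psi \in H_0^1$. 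Factoring $H_{\gamma_2-\gamma_1} = H_{\gamma_2} H_{(\gamma_2-\gamma_1)/\gamma_2}$ (this factorization is precisely why the hypothesis appears as a \emph{relative} bound) then gives
\[ \|\KK_{\gamma_2} H_{\gamma_2-\gamma_1} v\|_{\gamma_2} \leq \|H_{(\gamma_2-\gamma_1)/\gamma_2} v\|_{\gamma_2} \leq \left\|\tfrac{\gamma_1-\gamma_2}{\gamma_2}\right\|_\infty \|v\|_{\gamma_2}, \]
so a Neumann series in the $\gamma_2$-weighted norm converges and the desired inverse exists.

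The main obstacle is spotting the projection structure of $\KK_{\gamma_2}H_{\gamma_2}$ in the $\gamma_2$-weighted inner product; without this re-weighting one cannot sharpen the contraction constant to the relative $L^\infty$-bound that the statement invokes. Everything else amounts to bookkeeping with the defining equations of $u_{\gamma_i,f}$ and the operators $\Li{\gamma_2}$, $H_\kappa$, $\KK_{\gamma_2}$.
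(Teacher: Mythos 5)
Your proof is correct and follows essentially the same route as the paper: the same decomposition $w = u_{\gamma_1,f}-u_{\gamma_2,f}$ with the coefficient swap $\div(\gamma_2\nabla u_{\gamma_1,f})=\div((\gamma_2-\gamma_1)\nabla u_{\gamma_1,f})$, and the same underlying projection structure of $\KK_{\gamma_2}$ for the invertibility. The only cosmetic difference is that you exhibit the projection in the $\gamma_2$-weighted inner product, while the paper conjugates by $H_{\gamma_2^{1/2}}$ (its Proposition~\ref{prop1}) to get an unweighted orthogonal projection --- these are the same fact, and you actually carry out the Neumann-series step that the paper only gestures at.
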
 
\begin{proof}
By definition of the inhomogeneous Dirichlet problem, we have that 
$u_{\gamma_2,f} = u_{\gamma_1,f} + w$, where $w$ satisfies the homogenous problem 
\[ \div (\gamma_2 \nabla w) = -\div(\gamma_2  u_{\gamma_1,f}) = 
-\div((\gamma_2 -\gamma_1) u_{\gamma_1,f}), \]
and where we used that $u_{\gamma_2,f}$ solves the problem \eqref{maineq} with $\gamma= \gamma_2$.
Thus, 
\[ u_{\gamma_2,f} = 
[I - L(\gamma_2)^{-1} \div((\gamma_2 -\gamma_1) \nabla] u_{\gamma_1,f}. \]
%which establishes the first result by inversion. 
Applying the gradient yields 
\[ \nabla u_{\gamma_2,f} = 
[I - \nabla L(\gamma_2)^{-1} \div[(\gamma_2 -\gamma_1) ] \nabla u_{\gamma_1,f}. \]
 Under the stated assumptions, the operator on the left 
has a bounded inverse, as can also be seen from the next proposition. 
\end{proof} 
Note that the multiplication operator satisfies 
$H_{\kappa_1} H_{\kappa_2} = H_{\kappa_1 \kappa_2}$, these operators commute, 
and 
$\|H_\kappa\|_2 \leq \|\kappa\|_{\infty}$.

The operators $\KK_\gamma$ play an important role as they can be expressed 
as projection operator. 
\begin{proposition}\label{prop1}
The operator $\KK_\gamma$ can be written as 
\[ \KK_\gamma = H_{\gamma^\frac{1}{2}}^{-1} R_\gamma  H_{\gamma^\frac{1}{2}}^{-1}, \]
where $R_\gamma$ is an orthogonal projection operator on $\ltov$.
Moreover, the range of $R_\gamma$ is orthogonal to the space 
\begin{equation}\label{osp}
 \{ z \in \ltov \,|\, \div(\gamma^\frac{1}{2} z ) = 0 \}. \end{equation} 
\end{proposition}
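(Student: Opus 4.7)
The plan is to unwrap the definition $R_\gamma := H_{\gamma^{1/2}} \KK_\gamma H_{\gamma^{1/2}} = H_{\gamma^{1/2}} \nabla L(\gamma)^{-1} \div \, H_{\gamma^{1/2}}$ into a concrete elliptic solve: for $g \in \ltov$, one has $R_\gamma g = \gamma^{1/2} \nabla v$, where $v \in H_0^1(\Omega)$ is the unique solution of
\[ \div(\gamma \nabla v) = \div(\gamma^{1/2} g) \quad \text{in } \Omega, \qquad v = 0 \quad \text{on } \partial \Omega. \]
Everything else is read off from the weak form of this equation, namely $\int_\Omega \gamma \nabla v \cdot \nabla \phi \, dx = \int_\Omega \gamma^{1/2} g \cdot \nabla \phi \, dx$ for all $\phi \in H_0^1(\Omega)$.

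To verify idempotency $R_\gamma^2 = R_\gamma$, I would apply $R_\gamma$ to $R_\gamma g = \gamma^{1/2} \nabla v$: the next auxiliary solution $v'$ would have to satisfy $\div(\gamma \nabla v') = \div(\gamma^{1/2} \cdot \gamma^{1/2} \nabla v) = \div(\gamma \nabla v)$ with $v' = 0$ on $\partial \Omega$, so uniqueness for the Dirichlet problem forces $v' = v$. For self-adjointness, given $g,h \in \ltov$ with corresponding solutions $v, w$, I would plug $\phi = w$ into the weak formulation for $v$ and $\phi = v$ into the weak formulation for $w$, obtaining that both $(R_\gamma g, h)_{\ltov}$ and $(g, R_\gamma h)_{\ltov}$ equal the symmetric expression $\int_\Omega \gamma \nabla v \cdot \nabla w \, dx$. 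Together with boundedness, this shows $R_\gamma$ is an orthogonal projection.

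For the orthogonality statement about the range, I would take any $z$ in the space \eqref{osp} and compute
\[ (R_\gamma g, z)_{\ltov} = \int_\Omega \gamma^{1/2} \nabla v \cdot z \, dx = \int_\Omega \nabla v \cdot (\gamma^{1/2} z)\, dx = -\langle \div(\gamma^{1/2} z), v \rangle_{H^{-1}, H_0^1} = 0, \]
since $v \in H_0^1(\Omega)$ and $\div(\gamma^{1/2} z) = 0$ as a distribution. This gives orthogonality of the range of $R_\gamma$ to the divergence-free subspace.

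The main obstacle, more bookkeeping than genuinely difficult, is keeping the function spaces and their dualities straight: checking that $\div : \ltov \to H^{-1}(\Omega)$ is the correct precomposition, that multiplication by $\gamma^{\pm 1/2}$ is a bounded self-adjoint operator on $\ltov$ (using \eqref{ellip}), and that the integration by parts $\int \nabla v \cdot (\gamma^{1/2} z)\, dx = -\langle \div(\gamma^{1/2} z), v \rangle$ is justified for $v \in H_0^1$ with $\gamma^{1/2} z \in \ltov$ only under the distributional interpretation of the divergence constraint. Once these identifications are in place, the three required properties (self-adjointness, idempotency, range characterization) follow directly from the weak form of the governing PDE.
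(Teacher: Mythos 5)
Your proposal is correct and follows essentially the same route as the paper: identify $R_\gamma g = \gamma^{1/2}\nabla v$ via the weak form of the auxiliary Dirichlet problem, then read off idempotency, self-adjointness, and the divergence-free orthogonality from that variational identity. The only cosmetic differences are that you get idempotency from uniqueness of the Dirichlet problem rather than by equating the two weak formulations, and you prove the orthogonality claim by a direct integration by parts against $z$, whereas the paper shows $R_\gamma$ annihilates the space \eqref{osp} (equivalent, given self-adjointness).
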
 
\begin{proof} 
Consider $R_\gamma =  H_{\gamma^\frac{1}{2}} \KK_\gamma H_{\gamma^\frac{1}{2}}$. 
We show that $R_\gamma^2 = R_\gamma$. Let $y \in \ltov$. Then by definition 
$R_\gamma y = \gamma^\frac{1}{2}\nabla v_y$, 
where $v_y \in H_0^1$ satisfies 
the variational formulation 
$$\int_\Omega \gamma \nabla v_y.\nabla w dx = \int_\Omega \gamma^\frac{1}{2} y. \nabla w dx$$ for 
arbitrary $w \in H_0^1$. Taking as $y = R_\gamma z =  \gamma^\frac{1}{2} \nabla v_z$, 
where $v_z$ satisfies the same equation as $v_y$ with $y$ replaced by $z$, we arrive at
\[ \int_\Omega \gamma \nabla v_y.\nabla w dx  = 
 \int_\Omega \gamma^\frac{1}{2} y. \nabla w dx
 = 
  \int_\Omega \gamma \nabla v_z. \nabla w dx = 
  \int_\Omega \gamma^\frac{1}{2} z. \nabla w dx, 
  \] 
   thus 
$ R_\gamma^2 y =  R_\gamma y$. 
 
Next, we show that  $R_\gamma$ is selfadjoint. Indeed, 
let $y,z \in \ltov$ be arbitrary and define $v_y,v_z$ as before. Then 
\[ (R_\gamma y, z)_{\ltov} = 
\int_\Omega \gamma^\frac{1}{2}\nabla v_y. z  = 
\int_\Omega \gamma \nabla v_y.\nabla v_z dx,
\]
which is symmetric in $y,z$, thus $R_\gamma y = R_\gamma^* y$, hence it is selfadjoint. 
As a consequence,  $R_\gamma$ is an orthogonal projector. 
Finally we show that $R_\gamma$ annihilates the space in \eqref{osp}. 
Let $z \in \ltov$ be in the space in \eqref{osp}. 
Then 
\[ R_\gamma z = \gamma^\frac{1}{2} \nabla L(\gamma^\dagger)^{-1}\div (\gamma^\frac{1}{2} z) = 0 \]
since $\div (\gamma^\frac{1}{2} z) = 0$ by definition. 
\end{proof}  

A useful observation is the following monotonicity property: 
\begin{lemma}\label{fprimepos}
If  $w(x) \geq 0$ a.e., $w \in L^\infty(\Omega)$,  then 
$ F'[\gamma] w \gow 0$. 
In particular, for $w_1,w_2 \in  L^\infty(\Omega)$ with $0 \leq 
w_1\leq w_2 $ a.e. in $\Omega$, and any $\gamma \in D(F)$, we have the estimate 
\begin{align}\label{uese} 
 \|F'[\gamma] (w_1 ) \|_Y \leq  \|F'[\gamma] (w_2) \|_Y .
\end{align} 
\end{lemma}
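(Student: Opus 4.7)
The plan is to read off both claims directly from the energy-integral representation \eqref{defFp} of $F'[\gamma]$, interpret the resulting selfadjoint operator in the Hilbert structure of $Y$, and then invoke the Löwner monotonicity of the Hilbert--Schmidt norm, inequality \eqref{HSineq}.

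First I would fix $\gamma \in D(F)$ and $f \in \ZD \subset H^{\frac{1}{2}}(\partial \Omega)$, and observe using \eqref{defFp} that
\[
\langle F'[\gamma] w\, f, f \rangle \;=\; \int_\Omega w\,|\nabla u_{\gamma,f}|^2 \, dx.
\]
If $w \geq 0$ a.e., the integrand is nonnegative, so $\langle F'[\gamma] w\, f, f \rangle \geq 0$ for every $f \in \ZD$. Interpreting $F'[\gamma]w$ as a selfadjoint operator $\ZD \to \ZD'$ via the duality $\langle \cdot, \cdot \rangle$, or equivalently composing with the Riesz isomorphism $\mathcal{I}$ to obtain a selfadjoint operator on $\ZD$ equipped with the $H^{\frac{1}{2}}$ inner product, this is exactly $\mathcal{I} F'[\gamma] w \gow 0$. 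This gives the first assertion $F'[\gamma] w \gow 0$.

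For the norm inequality \eqref{uese}, I would apply the same energy representation to $w_2 - w_1 \geq 0$ and to $w_1 \geq 0$. Both $\mathcal{I} F'[\gamma] w_1$ and $\mathcal{I} F'[\gamma] w_2$ are nonnegative selfadjoint operators on $\ZD$, and linearity of $w \mapsto F'[\gamma] w$ together with the first part yields
\[
0 \;\low\; \mathcal{I} F'[\gamma] w_1 \;\low\; \mathcal{I} F'[\gamma] w_2
\]
in the Löwner order. Since the image space norm $\|\cdot\|_Y$ is defined as $\|\mathcal{I}(\cdot)\|_{HS(\ZD)}$, inequality \eqref{HSineq} immediately gives
\[
\|F'[\gamma] w_1\|_Y \;=\; \|\mathcal{I} F'[\gamma] w_1\|_{HS} \;\leq\; \|\mathcal{I} F'[\gamma] w_2\|_{HS} \;=\; \|F'[\gamma] w_2\|_Y.
\]

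There is essentially no hard step: the whole argument is a one-line energy identity plus the Löwner--Hilbert--Schmidt inequality \eqref{HSineq}. The only point that deserves a sentence of care is the translation between the bilinear form $\langle F'[\gamma]w\, f, g\rangle$ on $\ZD \times \ZD$ and the selfadjoint operator $\mathcal{I} F'[\gamma] w$ on the Hilbert space $\ZD$, so that \eqref{HSineq} is applicable; this is handled by the same identification already used in the definition of $\|\cdot\|_Y$ in the previous section.
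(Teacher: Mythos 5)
Your proposal is correct and follows essentially the same route as the paper: the energy-integral representation \eqref{defFp} gives positivity of the quadratic form for $w\geq 0$, and then $0 \low \Lambda_\gamma'(w_1) \low \Lambda_\gamma'(w_2)$ combined with \eqref{HSineq} yields \eqref{uese}. Your extra sentence on the identification via the Riesz isomorphism $\mathcal{I}$ is a harmless elaboration of what the paper leaves implicit.
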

\begin{proof} 
For $f \in H^\frac{1}{2}(\partial \Omega)$ we have by \eqref{defFp} that 
\[ \langle  (F'[\gamma] w)f,f\rangle 
=  \langle \Lambda_\gamma'(w),f,f\rangle 
= \int_\Omega w(x) |\nabla u_{\gamma,f}|^2 dx \geq 0 .\] 
The second result follows now from \eqref{HSineq} since the assumption implies 
\[ 0 \low  \Lambda_\gamma'(w_1) \low   \Lambda_\gamma' (w_2). \]
\end{proof}

We can now state the important structural convexity property of the forward map. 
\begin{theorem}\label{main1}
For any $\gamma,\gamma^\dagger \in D(F)$
that satisfy  $\|\tfrac{\gamma-\gamma^\dagger}{\gamma^\dagger}\|_{\infty}<1$,
we have 
\begin{equation}\label{uppermain} 
0 \low \Lambda(\gamma)  - \Lambda(\gamma^\dagger) - \Lambda_\gamma'(\gamma-\gamma^\dagger) 
\low 
\Lambda_\gamma'(\tfrac{|\gamma-\gamma^\dagger|^2}{\gamma^\dagger}). 
\end{equation} 
\end{theorem}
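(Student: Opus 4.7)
The plan is to reduce the operator inequality to a pointwise quadratic-form inequality in $f \in \ZD$, and then to recognize the error term as the squared norm of an orthogonal projection.

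First I would test both sides in the L\"owner ordering against an arbitrary $f \in \ZD$, writing $u_1 := u_{\gamma,f}$, $u_2 := u_{\gamma^\dagger,f}$ and $g := \gamma-\gamma^\dagger$. Using the energy identity \eqref{defFF} and the formula \eqref{defFp} for the derivative, the middle term becomes
\[
E(f) := \langle [\Lambda_\gamma-\Lambda_{\gamma^\dagger}-\Lambda_\gamma'(g)]f,f\rangle = \int_\Omega g\,\nabla u_1\cdot(\nabla u_2-\nabla u_1)\,dx.
\]
The right-hand side expression $\langle \Lambda_\gamma'(g^2/\gamma^\dagger) f, f\rangle$ similarly evaluates as $\int_\Omega (g^2/\gamma^\dagger)|\nabla u_1|^2 dx$. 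So the whole theorem reduces to showing
\[
0 \leq E(f) \leq \int_\Omega \frac{g^2}{\gamma^\dagger}|\nabla u_1|^2\,dx \qquad \forall f \in \ZD.
\]

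Second, to evaluate $\nabla u_2 - \nabla u_1$, I would invoke Lemma~\ref{lemma1} with the admissible choice $\gamma_1=\gamma$, $\gamma_2=\gamma^\dagger$ (the assumption $\|(\gamma-\gamma^\dagger)/\gamma^\dagger\|_\infty<1$ is exactly what is needed there). This yields $\nabla u_1 = (I+\KK_{\gamma^\dagger}H_g)^{-1}\nabla u_2$, hence $\nabla u_2-\nabla u_1 = \KK_{\gamma^\dagger} H_g \nabla u_1$. Substituting back,
\[
E(f) = \bigl(H_g \nabla u_1,\; \KK_{\gamma^\dagger} H_g \nabla u_1\bigr)_{\ltov}.
\]

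Third, I would plug in the projector factorization from Proposition~\ref{prop1}, namely $\KK_{\gamma^\dagger}=H_{(\gamma^\dagger)^{1/2}}^{-1} R_{\gamma^\dagger} H_{(\gamma^\dagger)^{1/2}}^{-1}$, and move the multiplication operators across the inner product by self-adjointness. Writing $\varphi := H_{g/\sqrt{\gamma^\dagger}}\nabla u_1 \in \ltov$ and using $R_{\gamma^\dagger}=R_{\gamma^\dagger}^*=R_{\gamma^\dagger}^2$, this collapses to
\[
E(f) = \bigl(\varphi,\, R_{\gamma^\dagger}\varphi\bigr)_{\ltov} = \|R_{\gamma^\dagger}\varphi\|_{\ltov}^2.
\]
The nonnegativity of $E(f)$ is then immediate, giving the lower bound. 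For the upper bound I would use that $R_{\gamma^\dagger}$ is an orthogonal projection, so $\|R_{\gamma^\dagger}\varphi\|^2 \leq \|\varphi\|^2 = \int_\Omega (g^2/\gamma^\dagger)|\nabla u_1|^2\,dx$, which is exactly the required right-hand side.

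I do not expect a substantive obstacle: the identity in Lemma~\ref{lemma1} does all the heavy lifting, and the projector structure in Proposition~\ref{prop1} converts the remainder automatically into a sum of squares. The only points requiring some care are the sign bookkeeping when passing from $I-\KK_{\gamma^\dagger}H_{\gamma^\dagger-\gamma}$ to $I+\KK_{\gamma^\dagger}H_g$, and ensuring the correct placement of $\gamma^\dagger$ (rather than $\gamma$) in the upper-bound denominator, which is dictated by the fact that the projector we must use is $R_{\gamma^\dagger}$ coming from Lemma~\ref{lemma1} applied with reference conductivity $\gamma^\dagger$.
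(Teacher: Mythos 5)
Your proposal is correct and follows essentially the same route as the paper: test the quadratic form on $f$, use Lemma~\ref{lemma1} with $\gamma_1=\gamma$, $\gamma_2=\gamma^\dagger$ to identify the remainder as $\bigl(H_{\gamma-\gamma^\dagger}\nabla u_{\gamma,f},\,\KK_{\gamma^\dagger}H_{\gamma-\gamma^\dagger}\nabla u_{\gamma,f}\bigr)$, then apply the projector factorization of Proposition~\ref{prop1} to rewrite this as $\|R_{\gamma^\dagger}\varphi\|^2$ with $\varphi=H_{\Delta\gamma}\nabla u_{\gamma,f}$, from which both bounds follow via $0\low R_{\gamma^\dagger}\low I$. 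The sign bookkeeping and the placement of $\gamma^\dagger$ in the denominator are handled exactly as in the paper's argument.
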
 
\begin{proof} 
%Set $\gamma = 1 +\gamma$ and $\gamma^\dagger = 1+ \gamma^\dagger$. 
Define 
\begin{equation}\label{meq}
 B(\gamma,\gamma^\dagger):= \Lambda(\gamma)  - \Lambda(\gamma^\dagger) - \Lambda_\gamma'(\gamma-\gamma^\dagger).
 \end{equation} 
Then, from \eqref{defFF} and \eqref{defFp}, it follows that 
\[ \langle B(\gamma,\gamma^\dagger)f, f\rangle = 
\int_\Omega (\gamma-\gamma^\dagger) \nabla u_{\gamma,f}.(\nabla u_{\gamma^\dagger,f}-\nabla u_{\gamma,f}) dx. \]
Using Lemma~\ref{lemma1} with $\gamma_2 = \gamma^\dagger$ and
$\gamma_1 = \gamma$
and \eqref{defmulti}, we write this as 
\begin{align} 
 &\langle B(\gamma,\gamma^\dagger)f, f\rangle = 
\left( H_{\gamma-\gamma^\dagger}\nabla u_{\gamma,f}, 
 \left[ (I - T_{\gamma_{\dagger}} H_{\gamma^\dagger-\gamma})-I \right]
\nabla u_{\gamma,f} \right)_{\ltov} \nonumber  \\
& = \left( H_{\gamma-\gamma^\dagger}\nabla u_{\gamma,f}, 
 T_{\gamma_{\dagger}} H_{\gamma-\gamma^\dagger}
\nabla u_{\gamma,f}\right)_{\ltov}  \label{newest}
\end{align} 
Since $T_{\gamma_{\dagger}}$ is positive definite by Proposition~\ref{prop1}, 
the last term is positive, which proves that
$B(\gamma,\gamma^\dagger) \gow 0$.

For the upper bound, we 
define the shortcut  
\[  \Delta \gamma:= \frac{\gamma-\gamma^\dagger}{(\gamma^\dagger)^\frac{1}{2}}. \]
Using  
Proposition~\ref{prop1},  
the fact that an orthogonal projector satisfies $P \low I $, and \eqref{mult}, 
we find that 
\begin{align}  
\langle B(\gamma,\gamma^\dagger)f, f\rangle &=  
\left( H_{\Delta \gamma}\nabla u_{\gamma,f}, 
R_{\gamma_{\dagger}} H_{\Delta \gamma}
\nabla u_{\gamma,f}\right)_{\ltov} \nonumber \\
&\leq 
 \left( H_{\Delta \gamma}\nabla u_{\gamma,f},  H_{\Delta \gamma}
\nabla u_{\gamma,f}\right)_{\ltov}  \label{thisthis}  \\
&= \int_{\Omega} \frac{(\gamma-\gamma^\dagger)^2}{(\gamma^\dagger)}
|\nabla u_{\gamma,f}|^2 dx  = 
\langle \Lambda_{\gamma}'\left((\Delta \gamma)^2\right)f,f\rangle.
\nonumber
\end{align} 
This verifies the upper bound. 
\end{proof} 

The upper bound can be strengthened by a more detailed analysis. 
The following result is an improvement. 
\begin{theorem} \label{main2} 
With the same assumption on $\gamma,\gamma^\dagger$ as in Theorem~\ref{main1},
let $B(\gamma,\gamma^\dagger)$ be as in \eqref{meq}, i.e., 
\[ B(\gamma,\gamma^\dagger) = \Lambda_{\gamma} - \Lambda_{\gamma^\dagger} 
-\Lambda_\gamma'(\gamma-\gamma^\dagger) \]
Then 
\[ \langle B(\gamma,\gamma^\dagger) f,f\rangle \leq  \inf_{w \in \ltov:
 \div ((\gamma^\dagger)^\frac{1}{2} w) = 0} \|H_{\Delta \gamma}
\nabla u_{\gamma,f}  - w \|^2. \]
Moreover,  the following estimates holds:
\begin{align} 
 B(\gamma,\gamma^\dagger)  &\low
 \Lambda_\gamma' (\tfrac{|\gamma-\gamma^\dagger|^2}{\gamma^\dagger})
 - [\Lambda_\gamma -\Lambda_{\gamma^\dagger} ] \Lambda_{\gamma^\dagger}^{-1} 
 [\Lambda_\gamma -\Lambda_{\gamma^\dagger} ], \label{util1} \\
  B(\gamma,\gamma^\dagger)  &\low
 \Lambda_\gamma' (\tfrac{|\gamma-\gamma^\dagger|^2}{\gamma^\dagger})
 -
  \Lambda_\gamma'\left((\gamma-\gamma^\dagger)\tfrac{\gamma}{\gamma^\dagger}  \right)
 \left[\Lambda_{\gamma}'(\tfrac{\gamma^2}{\gamma^\dagger}) \right]^{-1}
 \Lambda_\gamma'\left((\gamma-\gamma^\dagger)\tfrac{\gamma}{\gamma^\dagger}   \right). \label{util2} 
\end{align} 
\end{theorem}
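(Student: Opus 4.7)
The plan is to extend the argument behind Theorem~\ref{main1}. Recall from that proof that, setting $\Delta\gamma := \tfrac{\gamma-\gd}{(\gd)^{1/2}}$ and $v := H_{\Delta\gamma}\nabla u_{\gamma,f}$, we arrived at the identity
\[ \langle B(\gamma,\gd) f, f\rangle = (R_{\gd} v, v)_{\ltov}. \]
Since $R_{\gd}$ is an orthogonal projector on $\ltov$ by Proposition~\ref{prop1}, $(R_{\gd} v, v) = \|R_{\gd} v\|^2$, and the variational characterization of the orthogonal projection yields $\|R_{\gd} v\|^2 = \inf_{w \in (\mathrm{range}\,R_{\gd})^\perp}\|v-w\|^2$. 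Proposition~\ref{prop1} furthermore places $W := \{z\in \ltov : \div((\gd)^{1/2}z)=0\}$ inside $(\mathrm{range}\,R_{\gd})^\perp$, so $\|R_{\gd} v\|^2 \leq \inf_{w\in W}\|v-w\|^2$; this is the first, infimum-type estimate.

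The two operator-valued bounds will then be obtained by plugging a specific parametric family $\{w_h\}_h \subset W$, indexed by a boundary datum $h$, into the infimum and minimizing over $h$. For \eqref{util1}, I would choose $w_h := (\gd)^{1/2}\nabla u_{\gd,h}$ with $h \in \ZD$; it lies in $W$ because $\div((\gd)^{1/2}w_h) = \div(\gd\nabla u_{\gd,h}) = 0$. Expanding $\|v - w_h\|^2$ as three $L^2$ integrals and invoking \eqref{defFF}--\eqref{defFp} converts them into
\[ \langle \Lambda_\gamma'(\tfrac{|\gamma-\gd|^2}{\gd})f,f\rangle - 2\langle(\Lambda_\gamma-\Lambda_{\gd})f,h\rangle + \langle \Lambda_{\gd}h,h\rangle, \]
a strictly convex quadratic in $h$ whose unique minimizer is $h^\ast = \Lambda_{\gd}^{-1}(\Lambda_\gamma-\Lambda_{\gd})f$; substituting this back produces exactly the right-hand side of \eqref{util1}.

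For \eqref{util2}, the analogous choice is $w_h := (\gd)^{-1/2}\gamma\nabla u_{\gamma,h}$, which again sits in $W$ since $\div((\gd)^{1/2}w_h) = \div(\gamma\nabla u_{\gamma,h}) = 0$. The identical expansion-and-minimization strategy, with weights $|\gamma-\gd|^2/\gd$, $(\gamma-\gd)\gamma/\gd$, and $\gamma^2/\gd$ appearing in the three $\Lambda_\gamma'$ terms, yields the minimizer $h^\ast = [\Lambda_\gamma'(\gamma^2/\gd)]^{-1}\Lambda_\gamma'((\gamma-\gd)\gamma/\gd)f$ and therewith \eqref{util2}.

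The main point requiring a careful check is the invertibility of $\Lambda_{\gd}$ and $\Lambda_\gamma'(\gamma^2/\gd)$ that show up in the minimizers. Restricted to $\ZD$, both operators are strictly positive thanks to ellipticity \eqref{ellip} (the quadratic forms are bounded below by a constant times $\ai$ times the $H^{1/2}$-seminorm), so their restrictions are invertible and $h^\ast$ genuinely lies in $\ZD$; all operator inequalities in the statement are then to be read in this setting, consistent with the functional framework of Section~2.
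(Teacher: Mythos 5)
Your proposal is correct and follows essentially the same route as the paper: the identity $\langle B(\gamma,\gd)f,f\rangle=\|R_{\gd}H_{\Delta\gamma}\nabla u_{\gamma,f}\|^2$ from the proof of Theorem~\ref{main1}, the variational characterization of the orthogonal projection restricted to the divergence-free subspace, and then the two competitor families $(\gd)^{1/2}\nabla u_{\gd,h}$ and $\gamma(\gd)^{-1/2}\nabla u_{\gamma,h}$ with minimization of the resulting quadratic form in $h$ (the paper parametrizes $h=\sum_i c_i f_i$ by coefficients, which is the same thing). Your explicit computation of the minimizers and the closing remark on positivity/invertibility of $\Lambda_{\gd}$ and $\Lambda_\gamma'(\gamma^2/\gd)$ on $\ZD$ are consistent with, and slightly more careful than, the paper's argument.
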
 
\begin{proof}
With \eqref{thisthis} we find that 
\[ 
 \langle B(\gamma,\gamma^\dagger)f, f\rangle = 
 \|R_{\gamma^\dagger} H_{\Delta \gamma} 
\nabla u_{\gamma,f} \|^2 
=  \inf_{w \in R_{\gamma^\dagger}^\bot} \|H_{\Delta \gamma}
\nabla u_{\gamma,f}  - w \|^2,  \]
where $R_{\gamma^\dagger}^\bot$ is the orthogonal complement of the range of 
$R_{\gamma^\dagger}$. By Proposition~\ref{prop1}, the set 
of $w$ with  $\div ((\gamma^\dagger)^\frac{1}{2} w)$ is subset of $R_{\gamma^\dagger}$, which 
yields the first inequality. 

Now we take $w = w_c$  as
\[ w_c = \sum_{i=1}^\infty c_i (\gamma^\dagger)^\frac{1}{2} \nabla u_{\gamma^\dagger,f_i}, \]
where %$f_i$ consist and orthogonal basis as above and 
$c_i \in \ell^2$ are coefficients to be specified 
below. Note that $\div ((\gamma^\dagger)^\frac{1}{2} w_c) = 0$ since 
$\nabla u_{\gamma^\dagger,f_i}$ solves  \eqref{maineq} with $\gamma = \gamma^\dagger$. 

 Expanding the square we find that 
\begin{align*} 
\|H_{\Delta \gamma}
\nabla u_{\gamma,f}  - w_c \|^2 = \int H_{\Delta \gamma}^2
|\nabla u_{\gamma,f}  |^2 dx &- 2 \sum_{i=1}^\infty c_i 
\int_{\Omega} (\gamma-\gamma^\dagger) \nabla u_{\gamma,f}. \nabla u_{\gamma^\dagger,f_i} dx \\
&+  \sum_{i,j=1}^\infty c_i c_j \int \gamma^\dagger   \nabla u_{\gamma^\dagger,f_i}.  
\nabla u_{\gamma^\dagger,f_i} dx \\
 = 
\langle \Lambda_{\gamma}'\left((\Delta \gamma)^2\right)f,f\rangle
&- 2 \sum c_i \langle \Lambda_{\gamma} - \Lambda_{\gamma^\dagger} f, f_i\rangle\\
&+ \sum_{i,j=1}^\infty c_i c_j \langle \Lambda_{\gamma^\dagger} f_i, f_j \rangle. 
\end{align*} 
Note that $f_i$ is an orthogonal basis such that $f_c =\sum c_i f_i$ represents an element 
in $H^\frac{1}{2}$  and hence, the last line can be written in terms of $f,f_c$ and 
the Dirichlet-to-Neumann operators. Now minimizing over $f_c$ yields that 
\begin{align*} 
\inf_c \|H_{\Delta \gamma}
\nabla u_{\gamma,f}  - w \|^2 = 
&\langle \Lambda_{\gamma}'\left((\Delta \gamma)^2\right)f,f\rangle\\
& \qquad - \langle \Lambda_{\gamma^\dagger}^{-1} (\Lambda_{\gamma} - \Lambda_{\gamma^\dagger}) f, 
(\Lambda_{\gamma} - \Lambda_{\gamma^\dagger}) f \rangle, 
\end{align*} 
which proves the first result. 

For the second one, we take 
\[ w_c = \sum_{i=1}^\infty c_i
 \frac{\gamma}{(\gamma^\dagger)^\frac{1}{2}} \nabla u_{\gamma,f_i}, \]
 which is again in $R_{\gamma^\dagger}$. As above, we find an analogous result with 
\begin{align*} 
&\|H_{\Delta \gamma}
\nabla u_{\gamma,f}  - w_c \|^2 \\
&= \int H_{\Delta \gamma}^2
|\nabla u_{\gamma,f}^2| dx - 2 \sum_{i=1}^\infty c_i 
\int_{\Omega} (\gamma-\gamma^\dagger)\frac{\gamma}{\gamma^\dagger}
 \nabla u_{\gamma,f}. \nabla u_{\gamma,f_i} dx \\
& \qquad \qquad \qquad 
+  \sum_{i,j=1}^\infty c_i c_j \int \frac{\gamma^2}{\gamma^\dagger} \nabla u_{\gamma,f_i}.  
\nabla u_{\gamma,f_i} dx  \\
& =
\langle \Lambda_{\gamma}'\left((\Delta \gamma)^2\right)f,f\rangle 
- 2\langle \Lambda_{\gamma}'\left((\gamma-\gamma^\dagger)\frac{\gamma}{\gamma^\dagger}\right) 
f, f_c\rangle
+  \langle \Lambda_{\gamma}'\left(\frac{\gamma^2}{\gamma^\dagger} \right)  f_c, f_c \rangle. 
\end{align*}
Proceeding as above yields the second result. 
\end{proof}

%\input{newupperbound}

%\begin{remark}\rm 

Next we derive some useful ordering and norm estimates: 
By rearranging terms and switching $\gamma$ and $\gamma^\dagger$ in \eqref{uppermain} using the lower bound, 
and then the upper bound in  \eqref{uppermain},
we obtain  the following two inequalities. 
\begin{corollary} 
Under the same assumptions as in Theorem~\ref{main1}, we have 
\begin{equation} \label{conmo}
 \Lambda_\gamma'(\gamma -\gamma^\dagger) \low 
\Lambda(\gamma) - \Lambda(\gamma^\dagger) \low   \Lambda_{\gamma^\dagger}'(\gamma -\gamma^\dagger)
\end{equation}
as well as 
\begin{equation} \label{conmo1}
\Lambda_{\gamma^\dagger}'\left(
\tfrac{\gamma^\dagger}{\gamma} (\gamma -\gamma^\dagger)\right) \low 
\Lambda(\gamma) - \Lambda(\gamma^\dagger) \low   \Lambda_\gamma'\left(
\tfrac{\gamma}{\gamma^\dagger} (\gamma -\gamma^\dagger)\right). 
\end{equation}
\end{corollary}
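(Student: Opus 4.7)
The plan is to derive each of the four one-sided bounds by algebraic manipulation of the two-sided estimate \eqref{uppermain} from Theorem~\ref{main1}, applied either directly or after interchanging the roles of $\gamma$ and $\gamma^\dagger$. Crucially, the operator $\Lambda_\gamma'(\cdot)$ is linear in its coefficient argument (since it is defined via the integral in \eqref{defFp}), so sums and signs pass freely through it, and the L\"owner ordering is reversed by multiplication by $-1$.

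For \eqref{conmo}: the lower bound $0 \low B(\gamma,\gamma^\dagger)$ rearranges immediately to $\Lambda_\gamma'(\gamma-\gamma^\dagger) \low \Lambda(\gamma)-\Lambda(\gamma^\dagger)$, giving the left half. To obtain the right half, I would apply the very same lower bound with $\gamma$ and $\gamma^\dagger$ swapped, yielding $\Lambda_{\gamma^\dagger}'(\gamma^\dagger-\gamma) \low \Lambda(\gamma^\dagger) - \Lambda(\gamma)$; negating both sides and using linearity in the coefficient gives exactly $\Lambda(\gamma) - \Lambda(\gamma^\dagger) \low \Lambda_{\gamma^\dagger}'(\gamma-\gamma^\dagger)$.

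For \eqref{conmo1}: start from the upper bound in \eqref{uppermain}, which says $\Lambda(\gamma) - \Lambda(\gamma^\dagger) - \Lambda_\gamma'(\gamma-\gamma^\dagger) \low \Lambda_\gamma'\bigl(\tfrac{(\gamma-\gamma^\dagger)^2}{\gamma^\dagger}\bigr)$. Collecting the two $\Lambda_\gamma'$ terms on the right via linearity gives
\[
\Lambda_\gamma'(\gamma-\gamma^\dagger) + \Lambda_\gamma'\Bigl(\tfrac{(\gamma-\gamma^\dagger)^2}{\gamma^\dagger}\Bigr)
= \Lambda_\gamma'\Bigl((\gamma-\gamma^\dagger)\bigl[1 + \tfrac{\gamma-\gamma^\dagger}{\gamma^\dagger}\bigr]\Bigr)
= \Lambda_\gamma'\Bigl(\tfrac{\gamma}{\gamma^\dagger}(\gamma-\gamma^\dagger)\Bigr),
\]
which is the right-hand inequality of \eqref{conmo1}. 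Then I would swap $\gamma$ and $\gamma^\dagger$ in this last conclusion and negate to obtain $\Lambda_{\gamma^\dagger}'\bigl(\tfrac{\gamma^\dagger}{\gamma}(\gamma-\gamma^\dagger)\bigr) \low \Lambda(\gamma) - \Lambda(\gamma^\dagger)$, which is the left-hand bound of \eqref{conmo1}.

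The only potential obstacle is a bookkeeping point: Theorem~\ref{main1} was stated under the asymmetric hypothesis $\|\tfrac{\gamma-\gamma^\dagger}{\gamma^\dagger}\|_\infty < 1$, whereas the ``swap'' step needs its mirror $\|\tfrac{\gamma^\dagger-\gamma}{\gamma}\|_\infty < 1$. I would simply note that since both $\gamma$ and $\gamma^\dagger$ lie in the ellipticity interval $[\ai,\aii]$, by shrinking the admissible neighborhood slightly one may assume both relative smallness conditions simultaneously; no other genuine difficulty arises, since the corollary is purely a rearrangement of Theorem~\ref{main1}.
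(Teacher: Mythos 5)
Your proposal is correct and follows exactly the paper's (one-sentence) argument: rearrange the lower bound of \eqref{uppermain} and apply it with $\gamma$ and $\gamma^\dagger$ swapped to get \eqref{conmo}, then combine the upper bound with linearity of $\Lambda_\gamma'$ in its coefficient (using $1+\tfrac{\gamma-\gamma^\dagger}{\gamma^\dagger}=\tfrac{\gamma}{\gamma^\dagger}$) and swap again to get \eqref{conmo1}. Your remark that the swap step requires the mirrored smallness condition $\|\tfrac{\gamma^\dagger-\gamma}{\gamma}\|_\infty<1$ is a point the paper silently glosses over, and your fix is adequate.
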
 
Note that the lower bound in \eqref{uppermain} and consequently also in \eqref{conmo} was already been established by
von Harrach and Seo \cite{HaSe} and used in various uniqueness and stability estimates \cite{MR3900209,HaU13}. 
Our upper bound in \eqref{uppermain} seem to be new to the knowledge of the author.

We may observe by a Taylor expansion, that the middle term in the 
inequality \eqref{uppermain} is 
$$-\frac{1}{2} F''[\gamma](\gamma^\dagger-\gamma,\gamma^\dagger-\gamma) + 
o(|\gamma^\dagger-\gamma|^2) = 
-\frac{1}{2} F''[\gamma^\dagger](\gamma^\dagger-\gamma,\gamma^\dagger-\gamma), $$
while 
 the left-hand side is 
$ F'[\gamma^\dagger](\frac{|\gamma^\dagger-\gamma|^2}{\gamma^\dagger}) + 
o(|\gamma^\dagger-\gamma|^2)$. 
By setting $\gamma = \gamma^\dagger + \epsilon w$ for arbitrary $w \in L^\infty$, 
and taking the limit, we obtain 
\[ 0 \leq - F''[\gamma^\dagger](w,w) \leq 
2 F'[\gamma^\dagger](\tfrac{|w|^2}{\gamma^\dagger}). \] 
Hence the second derivative of $F$ is always negative definite. A similar definite result 
can be obtained for all even derivatives. 
%\end{remark}

For later use, we also establish a related inequality:
\begin{lemma}
Let $\xi_{\dagger}:= \|\frac{\gamma-\gamma^\dagger}{\gamma^\dagger}\|_\infty <1$
%$\xi:= \|\frac{\gamma-\gamma^\dagger}{\gamma^\dagger}\|_\infty <1$
%
 Then we have 
the estimates 
\begin{align} 
0 \low 
 \Lambda_{\gamma^\dagger}'(\gamma - \gamma^\dagger) -
 \Lambda_\gamma'(\gamma - \gamma^\dagger) &\low (2 +\xi_\dagger)
  \Lambda_\gamma'\left(\tfrac{|\gamma-\gamma^\dagger|^2}{\gamma^\dagger}\right)
 \label{babel0}  
% \\
%& \low \frac{2 +\xi}{(1-\xi)^2} F'[\gamma^\dagger](\Delta \gamma)^2
% \label{babel} 
\end{align}
\end{lemma}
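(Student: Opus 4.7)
Testing both sides against a fixed $f \in H^{1/2}(\partial \Omega)$, the claim amounts to the scalar inequalities
\[
0 \;\leq\; \int_\Omega (\gamma-\gamma^\dagger)\bigl(|\nabla u_{\gamma^\dagger,f}|^2 - |\nabla u_{\gamma,f}|^2\bigr)\,dx \;\leq\; (2+\xi_\dagger)\int_\Omega \frac{(\gamma-\gamma^\dagger)^2}{\gamma^\dagger}|\nabla u_{\gamma,f}|^2\,dx.
\]
The lower bound is immediate from \eqref{conmo}: chaining $\Lambda_\gamma'(\gamma-\gamma^\dagger) \low \Lambda(\gamma)-\Lambda(\gamma^\dagger) \low \Lambda_{\gamma^\dagger}'(\gamma-\gamma^\dagger)$ and subtracting the outer terms yields $\Lambda_{\gamma^\dagger}'(\gamma-\gamma^\dagger) - \Lambda_\gamma'(\gamma-\gamma^\dagger) \gow 0$.

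For the upper bound I would set $\delta := \nabla u_{\gamma^\dagger,f} - \nabla u_{\gamma,f}$ and use the algebraic identity $|v|^2 - |u|^2 = |\delta|^2 + 2\,\delta \cdot u$ with $u = \nabla u_{\gamma,f}$, $v = \nabla u_{\gamma^\dagger,f}$. This splits the integral into
\[
\int_\Omega (\gamma-\gamma^\dagger)|\delta|^2\,dx \;+\; 2\int_\Omega (\gamma-\gamma^\dagger)\, \delta \cdot \nabla u_{\gamma,f}\,dx.
\]
The second, cross, term is exactly $2\langle B(\gamma,\gamma^\dagger)f,f\rangle$ by the formula in the proof of Theorem~\ref{main1}, so by the upper bound in \eqref{uppermain} it is at most $2\langle \Lambda_\gamma'(\tfrac{|\gamma-\gamma^\dagger|^2}{\gamma^\dagger})f,f\rangle$.

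For the quadratic term, the pointwise estimate $|\gamma-\gamma^\dagger| \leq \xi_\dagger\,\gamma^\dagger$ gives
\[
\int_\Omega (\gamma-\gamma^\dagger)|\delta|^2\,dx \;\leq\; \xi_\dagger \int_\Omega \gamma^\dagger |\delta|^2 \,dx .
\]
The key identification, which I expect to be the main obstacle, is that the weighted energy on the right equals again $\langle B(\gamma,\gamma^\dagger)f,f\rangle$. To see this, I would invoke Lemma~\ref{lemma1} (with $\gamma_2=\gamma^\dagger$, $\gamma_1=\gamma$) to obtain $\delta = T_{\gamma^\dagger}H_{\gamma-\gamma^\dagger}\nabla u_{\gamma,f}$, and then use Proposition~\ref{prop1} in the form $H_{(\gamma^\dagger)^{1/2}}T_{\gamma^\dagger} = R_{\gamma^\dagger}H_{(\gamma^\dagger)^{-1/2}}$ to compute
\[
\int_\Omega \gamma^\dagger |\delta|^2\,dx = \|H_{(\gamma^\dagger)^{1/2}}\delta\|^2 = \|R_{\gamma^\dagger}H_{\Delta\gamma}\nabla u_{\gamma,f}\|^2 = \langle B(\gamma,\gamma^\dagger)f,f\rangle,
\]
where the last equality is \eqref{thisthis}. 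Applying \eqref{uppermain} again bounds this by $\langle \Lambda_\gamma'(\tfrac{|\gamma-\gamma^\dagger|^2}{\gamma^\dagger})f,f\rangle$.

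Summing the two contributions gives $(2+\xi_\dagger)\langle\Lambda_\gamma'(\tfrac{|\gamma-\gamma^\dagger|^2}{\gamma^\dagger})f,f\rangle$, which is the desired upper bound. The essential insight is that both pieces produced by the splitting of $|v|^2-|u|^2$ are controlled by the same quadratic form $\langle Bf,f\rangle$ already analyzed in Theorem~\ref{main1}; the projector structure of $R_{\gamma^\dagger}$ from Proposition~\ref{prop1} is precisely what makes the weighted $L^2$-norm of $\delta$ collapse to that quadratic form.
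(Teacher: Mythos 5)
Your proof is correct and, for the upper bound, is essentially the paper's own argument in different clothing: the paper expands the quadratic form in operator language and arrives at
$\left( \left[2 I + H_{(\gamma-\gamma^\dagger)/\gamma^\dagger} \right] R_{\gamma^\dagger} H_{\Delta \gamma}\nabla u_{\gamma,f},\, R_{\gamma^\dagger} H_{\Delta \gamma}\nabla u_{\gamma,f} \right)$,
and your cross term $2\,\delta\cdot\nabla u_{\gamma,f}$ and quadratic term $|\delta|^2$ are exactly the $2I$-piece and the $H$-piece of that expression, your identity $\int_\Omega\gamma^\dagger|\delta|^2\,dx = \|R_{\gamma^\dagger}H_{\Delta\gamma}\nabla u_{\gamma,f}\|^2 = \langle B(\gamma,\gamma^\dagger)f,f\rangle$ being precisely the projector computation the paper performs. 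The one place you genuinely deviate is the lower bound, which you pull from \eqref{conmo}; be aware that the right half of \eqref{conmo} is obtained in the paper by swapping $\gamma$ and $\gamma^\dagger$ in \eqref{uppermain} and therefore tacitly uses $\|\tfrac{\gamma-\gamma^\dagger}{\gamma}\|_\infty<1$, which is not among this lemma's hypotheses. This is harmless here because your own decomposition already yields the lower bound for free — $\langle\mathcal{A}f,f\rangle = 2\langle Bf,f\rangle + \int_\Omega(\gamma-\gamma^\dagger)|\delta|^2\,dx \geq (2-\xi_\dagger)\langle Bf,f\rangle \gow 0$ — which is exactly the paper's route via $(2-\xi_\dagger)I \low 2I + H_{(\gamma-\gamma^\dagger)/\gamma^\dagger}$, so I would state it that way rather than cite the corollary.
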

\begin{proof}
Define 
\[\mathcal{A} :=   \Lambda_{\gamma^\dagger}'(\gamma - \gamma^\dagger) -
\Lambda_\gamma'(\gamma - \gamma^\dagger) .\]
Then using Lemma~\ref{lemma1} with $\gamma_1 = \gamma$, $\gamma_2 = \gamma^\dagger$, 
we find  
\begin{align*} \langle \mathcal{A}f,f\rangle &= 
\int (\gamma-\gamma^\dagger) (|\nabla u_{\gamma^\dagger,f}|^2- 
|\nabla u_{\gamma,f}|^2) dx \\
& = \left(H_{\gamma-\gamma^\dagger} ( I + H_{(\gamma^\dagger)^{-\frac{1}{2}}} 
R_{\gamma^\dagger} H_{\Delta \gamma})\nabla u_{\gamma,f}, 
( I + H_{(\gamma^\dagger)^{-\frac{1}{2}}} 
R_{\gamma^\dagger} H_{\Delta \gamma})\nabla u_{\gamma,f}\right)\\
&\qquad - 
\left(H_{\gamma-\gamma^\dagger} \nabla u_{\gamma,f},\nabla u_{\gamma,f} \right) \\
& = \left( \left[2 H_{\Delta \gamma} R_{\gamma^\dagger} H_{\Delta \gamma}+ 
H_{\Delta \gamma}  R_{\gamma^\dagger} H_{\frac{\Delta \gamma}{(\gamma^\dagger)^\frac{1}{2}}}
R_{\gamma^\dagger}   H_{\Delta \gamma} \right] \nabla u_{\gamma,f},\nabla u_{\gamma,f} \right)\\
& = 
\left( \left[2 I +
H_{\frac{\Delta \gamma}{(\gamma^\dagger)^\frac{1}{2}}} \right] 
R_{\gamma^\dagger} H_{\Delta \gamma}
\nabla u_{\gamma,f},
R_{\gamma^\dagger} H_{\Delta \gamma} 
\nabla u_{\gamma,f} \right), 
\end{align*}
where we used that $R_{\gamma^\dagger}^2 = R_{\gamma^\dagger}$ and all operators 
are selfadjoint. Since 
$\xi_\dagger = \left\|\frac{\Delta \gamma}{(\gamma^\dagger)^\frac{1}{2}}\right\|$, we 
find that 
\[0 \low  (2 - \xi_\dagger) I \low 
2 I +
H_{\frac{\Delta \gamma}{(\gamma^\dagger)^\frac{1}{2}}} \low 
 (2 + \xi_\dagger) I,\]
and the lower bound follows as well as  the first upper bound since 
\begin{align*} 
&\left( R_{\gamma^\dagger} H_{\Delta \gamma}
\nabla u_{\gamma,f} ,
R_{\gamma^\dagger} H_{\Delta \gamma} 
\nabla u_{\gamma,f} \right) 
 = \|R_{\gamma^\dagger} H_{\Delta \gamma} \nabla u_{\gamma,f}\|^2 
\leq \| H_{\Delta \gamma} \nabla u_{\gamma,f}\|^2 \\
&= 
\langle \Lambda_\gamma'(\Delta \gamma)^2f,f\rangle.
\end{align*}
\end{proof}

As a consequence of Theorem~\ref{main1} and \ref{main2} we find the 
following upper bounds 
\begin{theorem}\label{th:three}
Under the same assumptions as in Theorem~1, we have 
\begin{align}\label{mainest1}
\| F(\gamma)  - F(\gamma^\dagger) - F'[\gamma](\gamma-\gamma^\dagger) \|_Y^2 \leq 
\|F'[\gamma]\left(\tfrac{|\gamma-\gamma^\dagger|^2}{\gamma^\dagger}\right) \|_Y^2.
\end{align}  
Moreover, there exists a constant $C$ depending only on $\ai,\aii,\Omega$ such that 
\begin{align}
\|F(\gamma)  - F(\gamma^\dagger) \|_Y^2 \leq C \sum_{i\in I} \left\langle  
\Lambda_\gamma'\left(\tfrac{|\gamma-\gamma^\dagger|^2}{\gamma^\dagger} \right) f_i,f_i \right\rangle  \label{x1} \\
 \|F'[\gamma](\tfrac{(\gamma-\gamma^\dagger)\gamma}{\gamma^\dagger})
 \|_Y^2 \leq C \sum_{i\in I}  \left\langle 
\Lambda_\gamma'\left(\tfrac{|\gamma-\gamma^\dagger|^2}{\gamma^\dagger} \right) f_i,f_i \right\rangle. \label{x2}
\end{align} 
\end{theorem}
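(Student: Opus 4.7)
Inequality \eqref{mainest1} is an immediate consequence of Theorem~\ref{main1}. The remainder
\[
B(\gamma,\gamma^\dagger) := F(\gamma) - F(\gamma^\dagger) - F'[\gamma](\gamma-\gamma^\dagger) = \Lambda_\gamma - \Lambda_{\gamma^\dagger} - \Lambda_\gamma'(\gamma-\gamma^\dagger)
\]
on the left-hand side is exactly the operator sandwiched in \eqref{uppermain}, and both $B(\gamma,\gamma^\dagger)$ and $\Lambda_\gamma'(\tfrac{|\gamma-\gamma^\dagger|^2}{\gamma^\dagger})$ are self-adjoint as bilinear forms from $\ZD$ to $\ZD'$ (their integral representations are symmetric in the two test functions). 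Composing with the Riesz isomorphism $\mathcal{I}$ converts them to self-adjoint operators on $\ZD$ and preserves the L\"owner order, so the monotonicity \eqref{HSineq} of the Hilbert--Schmidt norm transports $0 \low B \low \Lambda_\gamma'(\cdot)$ into the corresponding inequality for Hilbert--Schmidt norms. After squaring and recalling $\|\cdot\|_Y = \|\mathcal{I}\,\cdot\|_{HS(\ZD)}$, this is precisely \eqref{mainest1}.

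The estimates \eqref{x1} and \eqref{x2} share a single scheme: expand the squared $Y$-norm as $\sum_{i,j}|\langle\cdot\rangle|^2$, substitute the explicit energy representation from \eqref{defFF} (respectively \eqref{defFp}), and close the sum over $j$ by a Bessel inequality. For \eqref{x1}, the $(i,j)$-entry reads
\[
\langle[\Lambda_\gamma - \Lambda_{\gamma^\dagger}]f_i,f_j\rangle = \int_\Omega (\gamma-\gamma^\dagger)\nabla u_{\gamma,f_i}.\nabla u_{\gamma^\dagger,f_j}\,dx = (g_i,\, S_{\gamma^\dagger} f_j)_{\ltov},
\]
where $g_i := (\gamma-\gamma^\dagger)\nabla u_{\gamma,f_i} \in \ltov$ and $S_{\gamma^\dagger}: H^\frac{1}{2}(\partial\Omega) \to \ltov$, $f \mapsto \nabla u_{\gamma^\dagger,f}$, is the standard bounded solution operator. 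Since the $(f_j)$ are orthonormal in $H^\frac{1}{2}(\partial\Omega)$, Bessel's inequality applied to $S_{\gamma^\dagger}^* g_i$ gives $\sum_j |(g_i, S_{\gamma^\dagger} f_j)_{\ltov}|^2 \leq \|S_{\gamma^\dagger}\|^2\,\|g_i\|_{\ltov}^2$, and the pointwise bound $|\gamma-\gamma^\dagger|^2 \leq \aii\,\tfrac{|\gamma-\gamma^\dagger|^2}{\gamma^\dagger}$ converts $\|g_i\|_{\ltov}^2$ into $\aii\,\langle\Lambda_\gamma'(\tfrac{|\gamma-\gamma^\dagger|^2}{\gamma^\dagger})f_i,f_i\rangle$. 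A final summation in $i$ finishes \eqref{x1}.

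Inequality \eqref{x2} is proved by the same template applied to $F'[\gamma](w)$ with weight $w = (\gamma-\gamma^\dagger)\gamma/\gamma^\dagger$: by \eqref{defFp} the matrix entry becomes $(w\nabla u_{\gamma,f_i}, \nabla u_{\gamma,f_j})_{\ltov}$, Bessel is applied with $S_\gamma$ in place of $S_{\gamma^\dagger}$, and the replacement pointwise estimate is $w^2 = \tfrac{\gamma^2}{\gamma^\dagger}\cdot\tfrac{|\gamma-\gamma^\dagger|^2}{\gamma^\dagger} \leq \tfrac{\aii^2}{\ai}\,\tfrac{|\gamma-\gamma^\dagger|^2}{\gamma^\dagger}$. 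The main point requiring care, and the only place constants must be tracked, is the uniform bound $\|S_\gamma\|^2 \leq \aii/\ai$ for every admissible $\gamma$. This is classical: $u_{\gamma,f}$ minimizes $\int_\Omega \gamma|\nabla u|^2\,dx$ among $H^1$-functions with trace $f$, so testing with the competitor $u_{1,f}$ yields $\ai\|\nabla u_{\gamma,f}\|_{\ltov}^2 \leq \aii\|\nabla u_{1,f}\|_{\ltov}^2 = \aii\|f\|_{H^{1/2}(\partial\Omega)}^2$, the last equality being the paper's normalization of the $H^\frac{1}{2}$-norm. Consequently the constant $C$ in both \eqref{x1} and \eqref{x2} may be chosen as an explicit function of $\ai$ and $\aii$ alone.
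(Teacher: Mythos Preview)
Your argument for \eqref{mainest1} is the same as the paper's: both simply transport the L\"owner sandwich \eqref{uppermain} to Hilbert--Schmidt norms via \eqref{HSineq}.

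For \eqref{x1} and \eqref{x2} your route is correct but genuinely different from the paper's. The paper does \emph{not} use a direct Bessel argument; instead it invokes the refined estimates \eqref{util1} and \eqref{util2} of Theorem~\ref{main2}. From \eqref{util1} (and positivity of $\Lambda_{\gamma^\dagger}^{-1}$) one has
\[
0 \low [\Lambda_\gamma-\Lambda_{\gamma^\dagger}]\,\Lambda_{\gamma^\dagger}^{-1}\,[\Lambda_\gamma-\Lambda_{\gamma^\dagger}] \low \Lambda_\gamma'\bigl((\Delta\gamma)^2\bigr),
\]
and the paper then replaces $\Lambda_{\gamma^\dagger}^{-1}$ by $\mathcal{I}$ using the standard elliptic equivalence $\langle\Lambda_{\gamma^\dagger}f,f\rangle \sim \|f\|_{H^{1/2}}^2$; summing the resulting inequality $\|\mathcal{I}(\Delta F)f_i\|_{H^{1/2}}^2 \leq C\,\langle\Lambda_\gamma'((\Delta\gamma)^2)f_i,f_i\rangle$ over $i$ gives \eqref{x1}, and \eqref{x2} follows identically from \eqref{util2}. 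Your approach bypasses Theorem~\ref{main2} entirely: you expand the double sum, recognise each row as $(S_{\gamma^\dagger}^*g_i,f_j)_{H^{1/2}}$ (resp.\ with $S_\gamma$), and close the $j$-sum by Bessel. This is more elementary and yields constants depending only on $\ai,\aii$ via the energy-minimization bound $\|S_\gamma\|^2\leq\aii/\ai$, without ever touching the projector $R_{\gamma^\dagger}$ or the optimization in Theorem~\ref{main2}. The paper's route, on the other hand, makes visible the structural reason why the trace of $\Lambda_\gamma'((\Delta\gamma)^2)$ controls these norms: the squared operators $[\Delta F]\Lambda_{\gamma^\dagger}^{-1}[\Delta F]$ sit below $\Lambda_\gamma'((\Delta\gamma)^2)$ in the L\"owner order, which is a sharper operator-level statement than the norm inequality you derive. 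One minor point: your Bessel step uses orthonormality of the $(f_j)$ in $H^{1/2}$, whereas the paper's Assumption only says ``orthogonal''; this is harmless since the paper's identification $\|\cdot\|_Y=\|\mathcal{I}\,\cdot\|_{HS(\ZD)}$ already tacitly presumes normalization.
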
 
The last two upper bound can be interpreted as the trace norm of the operator
$\mathcal{I} \Lambda_\gamma'(\frac{|\gamma-\gamma^\dagger|^2}{\gamma^\dagger})$, 
i.e., the $\ell^1$-norm of the eigenvalues.  
\begin{proof}
The estimate \eqref{mainest1} is a direct consequence of \eqref{uppermain}. In view of \eqref{x1}, we find as
a consequence of \eqref{util1} and the fact that $\Lambda_{\gamma^\dagger}^{-1}$ is positive definite
 that 
\[ 0 \low 
  [\Lambda_\gamma -\Lambda_{\gamma^\dagger} ] \Lambda_{\gamma^\dagger}^{-1} 
 [\Lambda_\gamma -\Lambda_{\gamma^\dagger} ] \low  \Lambda_\gamma \left(( \Delta \gamma )^2 \right). \]
Standard elliptic estimates yield 
\[ \langle\Lambda_{\gamma^\dagger} f,f\rangle \sim
\|f\|_{H^\frac{1}{2}}^2 =  \langle\mathcal{I} f,f\rangle.    \]
where $\mathcal{I}$ is the Riesz isomorphism.  With $\Delta F:=   [\Lambda_\gamma -\Lambda_{\gamma^\dagger} ] $ we find that 
\[ \|\mathcal{I} (\Delta F) f\|_{H^\frac{1}{2}}^2  =  \langle (\Delta F), \mathcal{I} (\Delta F) f\rangle  \leq C
\langle \Lambda_\gamma' \left( (\Delta \gamma )^2 \right) f,f \rangle.  \] 
An equivalent alternative definition of the 
 Hilbert-Schmidt norm is the sum of the left-hand side over an orthogonal system $f_i$, which 
 yields the desired inequality \eqref{x1}. In the same way we obtain \eqref{x2}
 from \eqref{util2}.  
\end{proof} 

\begin{remark} \rm 
We note that the norm estimates \eqref{uese}, \eqref{mainest1}, \eqref{x1}, and \eqref{x2} remain 
valid, wenn the $Y$-norm is replaced by the operator norm $\|.\|_{L(H^\frac{1}{2}, H^{-\frac{1}{2}})}$.
This follows from the fact that the inequalities are derived 
from the L\"owner ordering and hold in particular for the largest eigenvalues, which agree with 
the operator norm. 
\end{remark}

\section{Tangential cone conditions}  
We can now state the first sufficient conditions for the 
tangential cone conditions. An immediate corollary of  the upper bound 
\eqref{mainest1} leads to the first condition:
\begin{theorem}
Let $\gamma,\gamma^\dagger \in D(F)$ with $\|\frac{\gamma-\gamma^\dagger}{\gamma^\dagger}\|_{\infty}<1$.
If 
 \[ 
\|F'[\gamma](\tfrac{|\gamma-\gamma^\dagger|^2}{\gamma^\dagger})\|_Y
\leq   
\|F'[\gamma](\gamma - \gamma^\dagger)\|_Y,\]
 then  the weak tangential cone condition with $\eta \geq \frac{1}{2}$  is satisfied. 

If for some   $\zeta <1$, it holds that 
 \[ 
\|F'[\gamma](\tfrac{|\gamma-\gamma^\dagger|^2}{\gamma^\dagger})\|_Y
\leq   \zeta
\|F'[\gamma](\gamma - \gamma^\dagger)\|_Y,\]
then the strong tangential cone condition 
with $\eta = \frac{\zeta}{1-\zeta}$ is satisfied. 
\end{theorem}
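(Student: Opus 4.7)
My plan is to reduce both assertions directly to the Taylor residual bound \eqref{mainest1}, which has already done the substantive work. The key observation is that, after identifying $x = \gamma$ and $\tilde{x} = \gd$ in the definitions \eqref{wtc}, \eqref{stc}, and \eqref{para}, the residual $F(\tilde{x}) - F(x) - F'(x)(\tilde{x}-x)$ appearing there is (up to sign) precisely the quantity controlled on the left-hand side of \eqref{mainest1}. So each statement should follow by chaining \eqref{mainest1} with the respective hypothesis, plus an elementary triangle-inequality rearrangement in the strong case to re-express $\|F'[\gamma](\gamma-\gd)\|_Y$ in terms of $\|F(\gamma)-F(\gd)\|_Y$.

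For the weak case, I would combine \eqref{mainest1} with the hypothesis in one line,
$$\|F(\gamma) - F(\gd) - F'[\gamma](\gamma-\gd)\|_Y^2 \;\le\; \|F'[\gamma](\tfrac{|\gamma-\gd|^2}{\gd})\|_Y^2 \;\le\; \|F'[\gamma](\gamma-\gd)\|_Y^2,$$
and observe that this is exactly the parallelogram reformulation \eqref{para} with $2\eta-1=0$. Hence \eqref{wtc} is satisfied with $\eta = \tfrac12$.

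For the strong case, the same two-step chain with the hypothesized factor $\zeta$ produces
$$\|F(\gamma) - F(\gd) - F'[\gamma](\gamma-\gd)\|_Y \;\le\; \zeta\, \|F'[\gamma](\gamma-\gd)\|_Y.$$
To convert the right-hand side into $\|F(\gamma)-F(\gd)\|_Y$, I would apply the triangle inequality
$$\|F'[\gamma](\gamma-\gd)\|_Y \;\le\; \|F(\gamma)-F(\gd)\|_Y + \|F(\gamma)-F(\gd) - F'[\gamma](\gamma-\gd)\|_Y,$$
substitute the previous estimate, and solve (using $\zeta<1$) to get $\|F'[\gamma](\gamma-\gd)\|_Y \le (1-\zeta)^{-1}\|F(\gamma)-F(\gd)\|_Y$. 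Feeding this back into the earlier bound yields \eqref{stc} with $\eta = \zeta/(1-\zeta)$.

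There is no real obstacle here beyond the bookkeeping of the orientation $(x,\tilde{x}) \leftrightarrow (\gamma,\gd)$ and the sign-freeness of norms; the content of the result is entirely absorbed into the earlier Theorem~\ref{th:three}, which is why this appears as an immediate corollary rather than a standalone argument.
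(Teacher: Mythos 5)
Your proof is correct and follows essentially the same route as the paper: the strong case is verbatim the paper's triangle-inequality bootstrap from \eqref{mainest1}, and the weak case is the paper's appeal to \eqref{para} with $2\eta-1=0$, which you simply spell out more explicitly. The remark about the sign-free identification $(x,\tilde{x})=(\gamma,\gamma^\dagger)$ is a correct and worthwhile clarification, but not a different argument.
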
 
\begin{proof}
In the second case, we have that 
\begin{align*} 
\|F'[\gamma](\gamma - \gamma^\dagger)\|_Y &\leq 
\|F'[\gamma](\gamma - \gamma^\dagger) - (F(\gamma) - F(\gamma^\dagger))\|_Y
+ \| F(\gamma) - F(\gamma^\dagger)\|_Y\\
& \leq 
\|F'[\gamma](\tfrac{|\gamma-\gamma^\dagger|^2}{\gamma^\dagger})\|_Y 
+ \|F(\gamma) - F(\gamma^\dagger)\|_Y \\
& \leq 
\zeta
\|F'[\gamma](\gamma - \gamma^\dagger)\|_Y 
+ \| F(\gamma) - F(\gamma^\dagger)\|_Y. 
\end{align*} 
From this, we conclude that 
\[ \|F'[\gamma](\gamma - \gamma^\dagger)\|_Y  \leq \frac{1}{1-\zeta} \| F(\gamma) - F(\gamma^\dagger)\|_Y \]
Thus, 
\begin{align*} 
 \|
(F(\gamma) - F(\gamma^\dagger)- F'[\gamma](\gamma - \gamma^\dagger) \|_Y
\leq \|F'[\gamma](\tfrac{|\gamma-\gamma^\dagger|^2}{\gamma^\dagger})\|_Y \\
\leq \zeta\|F'[\gamma](\gamma - \gamma^\dagger)\|_Y 
\leq \frac{\zeta }{1-\zeta} \| (F(\gamma) - F(\gamma^\dagger)\|_Y.
\end{align*} 
The weak cone condition for $\eta \geq \frac{1}{2}$ follows easily from  \eqref{para} and 
the upper estimate \eqref{uppermain}. 
\end{proof}

Since the above conditions involve der Fr\'{e}chet-derivative at  $\gamma$, it is of interest to 
derive localized conditions that only require $F'[\gamma^\dagger]$. 
This is established in the next theorem.  
\begin{theorem}\label{th:mjmi}
Let $\gamma,\gamma^\dagger \in D(F)$ with $\|\tfrac{\gamma-\gamma^\dagger}{\gamma^\dagger}\|_{\infty}<1$ and 
$ \|\tfrac{\gamma-\gamma^\dagger}{\gamma}\|_{\infty}<1$.
If it holds that 
 \begin{equation}\label{mjmi}
\|F'[\gamma^\dagger](|\gamma-\gamma^\dagger)|^2)\|_Y
\leq   \thet  
\|F'[\gamma^\dagger](\gamma - \gamma^\dagger)\|_Y,\end{equation}
with 
\[  \thet  
\leq \ai \frac{\eta}{4+ \eta},    \]
 then 
the strong tangential cone condition is satisfied with this $\eta$. 
Moreover, the estimate \eqref{mjmi} holds if  there is a constant $C$ such that 
 \begin{equation}\label{mjmi1}
\|F'[\gamma^\dagger](|\gamma-\gamma^\dagger|)\|_Y
\leq   C
\|F'[\gamma^\dagger](\gamma - \gamma^\dagger)\|_Y,\end{equation}
and 
\[ \|\gamma-\gamma^\dagger\|_{\infty} \leq C \thet . \]
\end{theorem}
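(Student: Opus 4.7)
The plan is to estimate the Taylor residual $\tilde B := F(\gamma) - F(\gamma^\dagger) - F'[\gamma^\dagger](\gamma-\gamma^\dagger)$ purely in terms of $F'[\gamma^\dagger]$, then combine with the hypothesis \eqref{mjmi} through a reverse-triangle argument to deduce \eqref{stc}.

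For the first step, the two assumptions $\|\tfrac{\gamma-\gamma^\dagger}{\gamma^\dagger}\|_\infty<1$ and $\|\tfrac{\gamma-\gamma^\dagger}{\gamma}\|_\infty<1$ are precisely what is needed to apply Theorem~\ref{main1} once with the roles of $\gamma$ and $\gamma^\dagger$ interchanged. Negating the resulting L\"owner chain gives
\[ -\Lambda_{\gamma^\dagger}'\bigl(\tfrac{|\gamma-\gamma^\dagger|^2}{\gamma}\bigr) \low \tilde B \low 0, \]
so $-\tilde B$ is L\"owner-squeezed between $0$ and a non-negative operator. The norm inequality \eqref{HSineq}, transferred to the $Y$-norm through the Riesz isomorphism $\mathcal{I}$, together with the pointwise bound $\tfrac{1}{\gamma}\leq\tfrac{1}{\ai}$ and Lemma~\ref{fprimepos}, yields
\[ \|\tilde B\|_Y \leq \tfrac{1}{\ai}\|F'[\gamma^\dagger](|\gamma-\gamma^\dagger|^2)\|_Y \leq \tfrac{\thet}{\ai}\|F'[\gamma^\dagger](\gamma-\gamma^\dagger)\|_Y, \]
the last step being exactly \eqref{mjmi}.

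For the second step, a standard reverse-triangle manipulation (mirroring the preceding theorem) gives $\|F'[\gamma^\dagger](\gamma-\gamma^\dagger)\|_Y\leq \tfrac{\ai}{\ai-\thet}\|F(\gamma)-F(\gamma^\dagger)\|_Y$ when $\thet<\ai$, hence $\|\tilde B\|_Y\leq \tfrac{\thet}{\ai-\thet}\|F(\gamma)-F(\gamma^\dagger)\|_Y$. This is \eqref{stc} as soon as $\tfrac{\thet}{\ai-\thet}\leq\eta$, i.e.\ $\thet\leq \ai\tfrac{\eta}{1+\eta}$, which is comfortably implied by the stated hypothesis $\thet\leq \ai\tfrac{\eta}{4+\eta}$. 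The weaker constant $4+\eta$ in the statement presumably reflects an alternative, non-symmetric derivation via \eqref{mainest1} and \eqref{babel0} that picks up the factor $3+\xi_\dagger<4$ when converting $F'[\gamma]$ into $F'[\gamma^\dagger]$, rather than the clean swap used above.

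For the "moreover" part I would use the pointwise bound $|\gamma-\gamma^\dagger|^2\leq \|\gamma-\gamma^\dagger\|_\infty\,|\gamma-\gamma^\dagger|$ together with Lemma~\ref{fprimepos}, obtaining $\|F'[\gamma^\dagger](|\gamma-\gamma^\dagger|^2)\|_Y \leq \|\gamma-\gamma^\dagger\|_\infty\,\|F'[\gamma^\dagger](|\gamma-\gamma^\dagger|)\|_Y$; plugging in \eqref{mjmi1} and the $L^\infty$-bound on $\gamma-\gamma^\dagger$ then delivers \eqref{mjmi}, with the two appearances of $C$ interpreted as independent constants whose product can be absorbed into the bound on $\thet$. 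The main obstacle is the first step: producing an $F'[\gamma^\dagger]$-only bound on $\tilde B$ without rerouting through $F'[\gamma]$, and verifying carefully that the L\"owner ordering survives the passage from the duality pairing to the Hilbert--Schmidt norm defining $\|\cdot\|_Y$ (which follows because $\mathcal{I}A$ is self-adjoint in the $H^{\frac{1}{2}}$ inner product when $A$ is symmetric).
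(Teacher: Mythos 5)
Your individual steps are essentially correct, but you have proved a different inequality from the one the theorem asserts. Your quantity $\tilde B = F(\gamma)-F(\gamma^\dagger)-F'[\gamma^\dagger](\gamma-\gamma^\dagger)$ is the Taylor remainder with the derivative evaluated at $\gamma^\dagger$, i.e.\ the instance of \eqref{stc} with $x=\gamma^\dagger$, $\tilde x=\gamma$. The paper's proof (and the way the condition is used for Landweber-type iterations, where $\gamma$ plays the role of the iterate and $\gamma^\dagger$ the fixed solution) targets the remainder $F(\gamma^\dagger)-F(\gamma)-F'[\gamma](\gamma^\dagger-\gamma)$, with the derivative at $\gamma$ --- the point \emph{not} appearing in the hypothesis \eqref{mjmi}. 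That is the hard part of the proof: one must convert control of $F'[\gamma^\dagger]$ into control of $F'[\gamma]$, which the paper does by inserting $\|F'[\gamma^\dagger](\gamma^\dagger-\gamma)-F'[\gamma](\gamma^\dagger-\gamma)\|_Y$ and bounding it via \eqref{babel0} by $(2+\xi_\dagger)\|F'[\gamma^\dagger](\tfrac{|\gamma-\gamma^\dagger|^2}{\gamma})\|_Y$. The resulting total factor $1+(2+\xi_\dagger)\leq 4$ is precisely the origin of the threshold $\thet\leq\ai\eta/(4+\eta)$. You noticed that your route yields the better constant $\ai\eta/(1+\eta)$ and attributed the discrepancy to the paper taking a ``non-symmetric'' detour; in fact the extra factor is the price of proving the statement with the derivative at $\gamma$, which your argument does not reach.

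The parts you do prove are fine: the swapped application of Theorem~\ref{main1} giving $0\low-\tilde B\low\Lambda_{\gamma^\dagger}'(\tfrac{|\gamma-\gamma^\dagger|^2}{\gamma})$ is exactly the paper's first step (it is the content of \eqref{mainest1} with the roles exchanged, legitimized by the second smallness hypothesis), the reverse-triangle bound $\|F'[\gamma^\dagger](\gamma-\gamma^\dagger)\|_Y\leq(1-\thet/\ai)^{-1}\|F(\gamma)-F(\gamma^\dagger)\|_Y$ coincides with the paper's, the passage from the L\"owner order to the $Y$-norm via \eqref{HSineq} is exactly how the paper argues, and the ``moreover'' part matches the paper's use of \eqref{uese} with $w_1=|\gamma-\gamma^\dagger|^2$, $w_2=\|\gamma-\gamma^\dagger\|_\infty|\gamma-\gamma^\dagger|$ (the condition $\|\gamma-\gamma^\dagger\|_\infty\leq C\thet$ should indeed be read as $\thet/C$, as you suspect). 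To complete the proof you need to add the step based on \eqref{babel0}.
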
 
\begin{proof}
We have 
\begin{align*} 
\|F'[\gamma^\dagger](\gamma - \gamma^\dagger)\|_Y &\leq 
\|F'[\gamma^\dagger](\gamma - \gamma^\dagger) + (F(\gamma^\dagger) - F(\gamma))\|_Y
+ \| F(\gamma) - F(\gamma^\dagger)\|_Y\\
& \leq 
\|F'[\gamma^\dagger](\tfrac{|\gamma-\gamma^\dagger|^2}{\gamma})\|_Y 
+ \| F(\gamma) - F(\gamma^\dagger\|_Y \\
& \leq \|\tfrac{1}{\gamma}\|_\infty 
\|F'[\gamma^\dagger](|\gamma-\gamma^\dagger|^2)\|_Y 
+ \| F(\gamma) - F(\gamma^\dagger)\|_Y \\
& \leq \frac{1}{\ai} \thet 
\|F'[\gamma^\dagger](\gamma - \gamma^\dagger)\|_Y 
+ \| F(\gamma) - F(\gamma^\dagger)\|_Y. 
\end{align*}
Thus, 
 \[ (1 -\frac{\thet }{\ai}  )  \|F'[\gamma^\dagger](\gamma - \gamma^\dagger)\|_Y 
 \leq \| F(\gamma) - F(\gamma^\dagger)\|_Y
 \] 
Using  \eqref{babel0}, with $\gamma,\gamma^\dagger$ swapped 
and the constant 
  $C =   {2 + \xi_\dagger} = 2 + \|\tfrac{\gamma-\gamma^\dagger}{\gamma}\|_\infty $ 
we find, 
 \begin{align*} 
&  \|F(\gamma)- F(\gamma^\dagger) - F'[\gamma](\gamma^\dagger - \gamma) \|_Y \\
&\leq   \|F(\gamma)- F(\gamma^\dagger) - F'[\gamma^\dagger](\gamma^\dagger - \gamma) \|_Y+ 
\|  F'[\gamma^\dagger](\gamma^\dagger - \gamma)-  F'[\gamma](\gamma^\dagger - \gamma)\|_Y \\
& \leq \|F'[\gamma^\dagger] (\tfrac{|\gamma-\gamma^\dagger|^2}{\gamma} )\|_Y+
C \|F'[\gamma^\dagger] (\tfrac{|\gamma-\gamma^\dagger|^2}{\gamma}) \|_Y \\
& \leq 
(1+C)\|\tfrac{1}{\gamma}\|_{\infty}
 \|F'[\gamma^\dagger]{|\gamma-\gamma^\dagger|^2} \|_Y \leq 
(3+\xi) \frac{1}{\ai}
\thet \|F'[\gamma^\dagger]{\gamma-\gamma^\dagger} \|_Y \\
& \leq (3 +\xi) 
\frac{\frac{\thet }{\ai}}{1 -\frac{\thet }{\ai}}
 \| F(\gamma) - F(\gamma^\dagger)\|_Y.
\end{align*} 
Thus if 
\[(3 +\xi) 
\frac{\frac{\thet }{\ai}}{1 - \frac{\thet }{\ai}\|_\infty} \leq \eta \Leftrightarrow 
\frac{\thet }{\ai} \leq 
\frac{\eta}{3 +\xi+ \eta} \Longleftarrow 
\frac{\thet }{\ai} \leq 
\frac{\eta}{3 +1 +\eta}, 
\]
 then 
the strong tangential cone condition is verified.
The sufficiency of \eqref{mjmi1} follows from this  bound by 
using \eqref{uese} with $w_1 =  (\gamma-\gamma^\dagger)^2$ and 
$w_2 = \|\gamma-\gamma^\dagger\|_{\infty} |\gamma-\gamma^\dagger|$, and thus
\[ 
\|F'[\gamma^\dagger](\gamma-\gamma^\dagger)^2\|_Y \leq 
\|\gamma-\gamma^\dagger\|_{\infty}  \|F'[\gamma^\dagger]\left(|\gamma-\gamma^\dagger|\right)\|_Y. 
\] 
\end{proof}

 An immediate corollary is obtained for the in case that  $\gamma$ is below or above the 
true conductivity since such a  monotonicity  yields \eqref{mjmi}.
\begin{corollary}\label{cortwo}
Let $\gamma,\gamma^\dagger \in D(F)$,
with  $\|\gamma -\gamma^\dagger\|_{L^\infty} \leq \ai$.
Assume that  either 
\[ \gamma(x) \leq \gamma^\dagger(x) \quad \text{or} \quad  
\gamma(x) \geq \gamma^\dagger(x) \qquad \qquad  \forall x \in \Omega \text{ a.e.} \] 
Then for 
\[ \|\gamma -\gamma^\dagger\|_{\infty} \leq \thet,  
 \]
with $\thet$ as in Theorem~\ref{th:mjmi}, 
the strong tangential cone condition is satisfied with some given $\eta$. 
\end{corollary}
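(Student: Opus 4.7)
The plan is to reduce the corollary directly to Theorem~\ref{th:mjmi}, observing that the monotonicity hypothesis trivializes the estimate \eqref{mjmi1} so that the sufficient condition of Theorem~\ref{th:mjmi} becomes automatic. Without loss of generality assume $\gamma(x) \geq \gamma^\dagger(x)$ a.e.\ (the opposite sign is identical). Then $\gamma-\gamma^\dagger$ has constant sign, so $|\gamma-\gamma^\dagger| = \gamma-\gamma^\dagger$ pointwise a.e.\ in $\Omega$. Since $w \mapsto F'[\gamma^\dagger](w)$ is linear in its argument, this immediately yields
\[
\|F'[\gamma^\dagger](|\gamma-\gamma^\dagger|)\|_Y = \|F'[\gamma^\dagger](\gamma-\gamma^\dagger)\|_Y,
\]
which is precisely \eqref{mjmi1} with constant $C=1$.

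Next I verify the remaining technical hypotheses of Theorem~\ref{th:mjmi}. The constraint $\|\gamma-\gamma^\dagger\|_\infty \leq \thet$ combined with $\thet \leq \ai \eta/(4+\eta) < \ai$ and the ellipticity lower bound $\gamma, \gamma^\dagger \geq \ai$ gives $\|(\gamma-\gamma^\dagger)/\gamma\|_\infty < 1$ and $\|(\gamma-\gamma^\dagger)/\gamma^\dagger\|_\infty < 1$, so both size assumptions in Theorem~\ref{th:mjmi} are met. The second half of Theorem~\ref{th:mjmi}, applied with the constant $C=1$ produced above and with $\|\gamma-\gamma^\dagger\|_\infty \leq \thet$, then upgrades \eqref{mjmi1} to the localized bound \eqref{mjmi}. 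Invoking the first half of Theorem~\ref{th:mjmi} delivers the strong tangential cone condition with the prescribed $\eta$.

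Essentially no obstacle arises, since all the analytical work is already done in Theorem~\ref{th:mjmi} and in the preceding structural estimates (Theorems~\ref{main1}--\ref{main2} and Lemma~\ref{fprimepos}). The only purpose of the monotonicity hypothesis is to collapse the absolute value inside $F'[\gamma^\dagger]$, which is the single nontrivial input needed to trigger the sufficient condition \eqref{mjmi1}. The role of the smallness bound $\|\gamma-\gamma^\dagger\|_\infty \leq \thet$ is to absorb the extra $\|\gamma-\gamma^\dagger\|_\infty$ factor appearing in the $w_1 = (\gamma-\gamma^\dagger)^2$ vs.\ $w_2 = \|\gamma-\gamma^\dagger\|_\infty|\gamma-\gamma^\dagger|$ comparison used in the proof of Theorem~\ref{th:mjmi}.
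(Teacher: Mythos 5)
Your proof is correct and follows exactly the paper's own route: the monotonicity hypothesis gives $|\gamma-\gamma^\dagger|=\pm(\gamma-\gamma^\dagger)$, so \eqref{mjmi1} holds with $C=1$ and Theorem~\ref{th:mjmi} applies. The paper's proof is a one-liner saying precisely this; your version merely adds the (correct) verification of the technical smallness hypotheses.
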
  
\begin{proof}
Because of monotonicity we have that 
$|\gamma(x)-\gamma^\dagger(x)| = \pm (\gamma(x)-\gamma^\dagger(x))$,
such that the result follows from \eqref{mjmi1} with $C = 1$.  
\end{proof}

Another corollary of this theorem is that the cone conditions are satisfied if a source condition or a 
conditionals stability estimate holds.  
\begin{corollary} 
Let $X$ be a Hilbert space that is continuously embedded into $L^\infty$. 
Assume that a source condition 
\[ \gamma - \gamma^\dagger = (F'[\gamma^\dagger]^* F'[\gamma^\dagger])^\mu \omega \]
holds with $\mu > \frac{1}{2}$,  where $F'[\gamma^\dagger]^*$ is the adjoint in $X$. 
Then, for $\|\gamma - \gamma^\dagger\|_{\infty}$ sufficiently small, 
 the strong tangential cone condition holds for a given $\eta \leq \frac{1}{2}$. 
\end{corollary}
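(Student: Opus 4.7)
The strategy is to apply Theorem~\ref{th:mjmi} through the sufficient condition~\eqref{mjmi}. Writing $v := \gamma - \gamma^\dagger$ and $T := F'[\gamma^\dagger]$, the task reduces to producing $\thet$ satisfying
\[
\|F'[\gamma^\dagger](v^2)\|_Y \le \thet \|F'[\gamma^\dagger](v)\|_Y
\]
with $\thet$ small enough to meet the threshold $\thet \le \ai\eta/(4+\eta)$ required there. Two routine ingredients are: by Fr\'echet-differentiability, $T$ is bounded as a map $L^\infty(\Omega)\to Y$ with some constant $C_L$; and the continuous embedding $X\hookrightarrow L^\infty$ gives $\|v\|_\infty \le C_e\|v\|_X$.

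The heart of the argument is the interpolation inequality stemming from the source condition. Diagonalizing the selfadjoint nonnegative operator $T^*T$ on $X$ with eigenpairs $(\lambda_i,e_i)$ and writing $\omega=\sum \omega_i e_i$, one has
\[
\|v\|_X^2 = \sum \lambda_i^{2\mu}|\omega_i|^2, \qquad \|Tv\|_Y^2 = \sum \lambda_i^{2\mu+1}|\omega_i|^2.
\]
H\"older's inequality with exponent $\theta := 2\mu/(2\mu+1)$ applied to the first sum then yields
\[
\|v\|_X \le \|Tv\|_Y^{\theta}\,\|\omega\|_X^{1-\theta},
\]
and the hypothesis $\mu > 1/2$ is essential here: it implies $\theta > 1/2$, hence $2\theta-1>0$.

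Combining the ingredients gives $\|T(v^2)\|_Y \le C_L\|v\|_\infty^2 \le C_L C_e^2\|v\|_X^2 \le C_L C_e^2 \|Tv\|_Y^{2\theta}\|\omega\|_X^{2(1-\theta)}$. Pulling out one factor of $\|Tv\|_Y$ and bounding the residual $\|Tv\|_Y^{2\theta-1}\le (C_L\|v\|_\infty)^{2\theta-1}$ via the continuity of $T$ on $L^\infty$ produces
\[
\|T(v^2)\|_Y \le \bigl(C_L^{2\theta}\, C_e^2\, \|v\|_\infty^{2\theta-1}\,\|\omega\|_X^{2(1-\theta)}\bigr)\,\|Tv\|_Y.
\]
Since $2\theta-1>0$, the bracketed coefficient, which plays the role of $\thet$, tends to $0$ as $\|v\|_\infty\to 0$. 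For $\|v\|_\infty$ small enough it falls below the threshold of Theorem~\ref{th:mjmi}, and the strong tangential cone condition with the prescribed $\eta\le 1/2$ follows.

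The main obstacle is the interpolation step, where the strict inequality $\mu>1/2$ cannot be relaxed: in the borderline case $\mu=1/2$ one has $2\theta-1=0$, the would-be $\thet$ becomes independent of $\|v\|_\infty$, and the threshold condition cannot be achieved by localization. A minor caveat is that the argument tacitly treats $\|\omega\|_X$ as bounded while $\|v\|_\infty$ is shrunk, which is the natural reading when $\omega$ is viewed as a fixed source element.
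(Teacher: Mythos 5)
Your proposal is correct and follows essentially the same route as the paper: reduce to condition~\eqref{mjmi} of Theorem~\ref{th:mjmi}, bound $\|F'[\gamma^\dagger](v^2)\|_Y$ by $L\|v\|_\infty^2\le L\|v\|_X^2$, invoke the interpolation/stability estimate $\|v\|_X\le\|\omega\|_X^{1/(2\mu+1)}\|F'[\gamma^\dagger]v\|_Y^{2\mu/(2\mu+1)}$ implied by the source condition, and absorb the leftover power $\|F'[\gamma^\dagger]v\|_Y^{(2\mu-1)/(2\mu+1)}$ into a coefficient that vanishes as $\|v\|_\infty\to 0$. The only cosmetic differences are that you derive the interpolation inequality via the spectral decomposition rather than citing it, and you track the embedding constant $C_e$ explicitly.
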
 
\begin{proof} 
The left-hand side of \eqref{mjmi} is bounded by 
\[
\|F'[\gamma^\dagger](|\gamma-\gamma^\dagger|^2)\|_Y \leq L \|\gamma-\gamma^\dagger\|_\infty^2 \leq 
L \|\gamma-\gamma^\dagger\|_X^2, \qquad L = \|F'[\gamma^\dagger]\|_{2,L(L^\infty,Y)}\]
The source condition implies a stability estimate (see, e.g., \cite[p.~59]{EHN96})
\[ \| \gamma - \gamma^\dagger\|_X \leq \|\omega\|^\frac{1}{1+2\mu}  
\|F'[\gamma^\dagger] (\gamma - \gamma^\dagger)\|_Y^\frac{2\mu}{2 \mu+1}.  \]
Combining the inequalities yields 
\begin{align*} 
 &\|F'[\gamma^\dagger](|\gamma-\gamma^\dagger|^2) \|_Y\leq 
 L\|\omega\|^\frac{2}{1+2\mu}  \|F'[\gamma^\dagger] (\gamma - \gamma^\dagger)\|_Y^{\frac{4\mu}{2 \mu+1}-1}
\|F'[\gamma^\dagger] (\gamma - \gamma^\dagger)\|_Y \\
& \leq 
 L\|\omega\|^\frac{2}{1+2\mu}  (L \|\gamma - \gamma^\dagger)\|_\infty)^{\frac{2\mu-1}{2 \mu+1}}
\|F'[\gamma^\dagger] (\gamma - \gamma^\dagger)\|_Y .
\end{align*} 
If $\mu < \frac{1}{2}$ then the assumptions in  Theorem~\ref{th:mjmi} are verified for 
\[ 
\|\gamma - \gamma^\dagger\|_{\infty} \leq 
 \left( \frac{\thet}{L\|\omega\|^\frac{2}{1+2\mu}} \right)^\frac{2 \mu+1}{2\mu-1}, \qquad 
 \|\gamma - \gamma^\dagger\|_{\infty} < \ai, \]
with $\thet$ as given in this theorem. 
\end{proof} 

In particular, it follows that the tangential cone condition locally holds true  in 
a dense subset of $N(F'[\gamma^\dagger])^\bot$.

An interesting observation is that the left-hand side in 
\eqref{mjmi1} generates a (semi)-norm 
\[\|w\|_*:= \|F'[\gamma^\dagger] \left( |w| \right)\|_Y. \]
Indeed, take $z_1,z_2$ in $L^\infty$ arbitrary, then 
$0 \leq |z_1(x) + z_2(x)| \leq |z_1(x)| + |z_2(x)|$. 
Then,  the triangle inequality 
for $\|w\|_*$ follows from \eqref{uese}
with $w_1 =  |z_1 + z_2|$ and $w_2 =  |z_1| + |z_2|$ 
 and the triangle inequality for $\|.\|_{Y}$. 
Thus, \eqref{mjmi1} can be interpreted as 
an  equivalence  condition between two (semi)-norms.
(Note that the reverse inequality is easy to obtain). 
 The well-known norm equivalence in finite-dimensional spaces leads to the
following result: 
\begin{corollary}
Let $\gamma,\gamma^\dagger \in D(F)$ with 
$\gamma - \gamma^\dagger$ being in a finite-dimensional space $X_n$,
and let $F'[\gamma^\dagger]$ be injective on $X_n$. 
Then there is a dimension-dependent constant $C_n$ such that for all 
\[ \|\gamma - \gamma^\dagger\|_{\infty} \leq C_n \thet, \]
the strong tangential cone condition is satisfied for some given $\eta$. 
\end{corollary}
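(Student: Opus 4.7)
The plan is to reduce the corollary to Theorem~\ref{th:mjmi} by verifying its sufficient criterion \eqref{mjmi1} on the finite-dimensional subspace $X_n$, and then invoking the elementary fact that all norms on a finite-dimensional vector space are equivalent. The key reusable observation, already noted in the discussion preceding the statement, is that both
\[ \|w\|_* := \|F'[\gamma^\dagger](|w|)\|_Y \qquad \text{and} \qquad \|w\|_{**} := \|F'[\gamma^\dagger](w)\|_Y \]
define seminorms on $L^\infty(\Omega)$. Moreover, since $F'[\gamma^\dagger] : L^\infty(\Omega) \to Y$ is a bounded linear operator, the seminorm $\|\cdot\|_*$ satisfies $\|w\|_* \leq \|F'[\gamma^\dagger]\|_{2,L(L^\infty,Y)} \|w\|_\infty$, and is in particular continuous on $L^\infty(\Omega)$.

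The first step is to observe that the injectivity hypothesis upgrades $\|\cdot\|_{**}$ from a seminorm to a genuine norm on the finite-dimensional space $X_n$. Consequently, on $X_n$ the norm $\|\cdot\|_{**}$ is equivalent to $\|\cdot\|_\infty$, and combining with the continuity bound above one obtains a constant $C_n$, depending only on $X_n$, $\gamma^\dagger$, and $\Omega$, such that
\[ \|w\|_* \leq C_n \|w\|_{**} \qquad \forall w \in X_n. \]
Specializing to $w = \gamma - \gamma^\dagger \in X_n$ yields precisely the hypothesis \eqref{mjmi1} with constant $C = C_n$.

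The second step is to feed this into Theorem~\ref{th:mjmi}: given the above bound and the smallness assumption $\|\gamma - \gamma^\dagger\|_\infty \leq C_n \thet$ (with $C_n$ absorbing the reciprocal constant required by that theorem, so reinterpreted as a new dimension-dependent constant), the condition \eqref{mjmi} is verified, and the strong tangential cone condition with the prescribed $\eta$ follows at once. There is no real conceptual obstacle; the only caveat worth flagging is that $C_n$ is produced by a non-quantitative compactness argument on the unit sphere of $(X_n, \|\cdot\|_{**})$ and will typically degenerate as $\dim X_n \to \infty$, reflecting the underlying ill-posedness of the electrical impedance tomography problem.
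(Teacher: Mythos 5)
Your proposal is correct and follows essentially the same route the paper intends: the remark preceding the corollary already identifies \eqref{mjmi1} as an equivalence of the two (semi)norms $\|F'[\gamma^\dagger](|\cdot|)\|_Y$ and $\|F'[\gamma^\dagger](\cdot)\|_Y$, to be obtained from finite-dimensional norm equivalence once injectivity makes the latter a genuine norm on $X_n$, and then Theorem~\ref{th:mjmi} is applied. Your routing through $\|\cdot\|_\infty$ via the bound $\|w\|_* \leq \|F'[\gamma^\dagger]\|_{2,L(L^\infty,Y)}\|w\|_\infty$ is a clean (and slightly more careful) way to execute the same idea, since it sidesteps any question about $|w|$ leaving $X_n$.
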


We may slightly generalize the monotonicity result. 
We denote the positive  and negative part of $\gamma-\gamma^\dagger$ by 
$$ (\gamma-\gamma^\dagger)^+(x) = \max\{\gamma(x)-\gamma^\dagger(x),0\} \qquad 
(\gamma-\gamma^\dagger)^- :=  -\min\{\gamma(x)-\gamma^\dagger(x),0\} .$$
such that 
\[ \gamma-\gamma^\dagger = (\gamma-\gamma^\dagger)^+-  (\gamma-\gamma^\dagger)^-.\]
 
\begin{theorem}\label{th:six}
Assume that there exists a constant $C$ or a constant $\nu <1$ such that 
\begin{align}
%\begin{split}
\label{fir}
 \|F'[\gamma^\dagger](\gamma-\gamma)^+(x) \|_Y &\leq 
C \|F'[\gamma^\dagger](\gamma-\gamma) \|_Y \quad \text{or}  \\
% \|F'[\gamma^\dagger](\gamma-\gamma)^-(x) \|_Y &\leq 
%C \|F'[\gamma^\dagger](\gamma-\gamma) \|_Y, \quad \text{or}  
%\end{split} \\
%\begin{split} 
\label{fir1}
  \|F'[\gamma^\dagger](\gamma-\gamma^\dagger)^+(x) \|_Y 
&\leq \nu  \|F'[\gamma^\dagger](\gamma-\gamma)^-(x) \|_Y   %\\ %\quad \text{or} \\
%  \|F'[\gamma^\dagger](\gamma-\gamma^\dagger)^-(x) \|_Y 
%&\leq \nu  \|F'[\gamma^\dagger](\gamma-\gamma)^+(x) \|_Y,
%\end{split} 
\end{align}
(or the respective inequalities with $()^+$ and $()^-$ swapped) hold. 

In case \eqref{fir}, if 
$$\|\gamma-\gamma^\dagger\|_\infty  \leq \thet\frac{1}{(2 C+1)}$$ 
holds, and  
in case \eqref{fir1}, if 
$$\|\gamma-\gamma^\dagger\|_\infty  \leq  \frac{\thet}{3} (1-\nu)$$
holds, with $\thet$ from Theorem~\ref{th:mjmi},  then 
\eqref{mjmi} is satisfied. 
In particular, under one of these conditions 
the tangential cone condition holds with any given $\eta$ if additionally 
 $\|\gamma-\gamma^\dagger\|$ is sufficiently small.  
\end{theorem}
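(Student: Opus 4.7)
The plan is to reduce both hypotheses to the sufficient condition \eqref{mjmi} at the end of Theorem~\ref{th:mjmi}, by routing through the intermediate inequality \eqref{mjmi1}. The key inputs are the linearity of $F'[\gamma^\dagger]$ as a function of its $L^\infty$-argument, the monotonicity \eqref{uese} from Lemma~\ref{fprimepos}, and the elementary decompositions
\[ \gamma-\gamma^\dagger = (\gamma-\gamma^\dagger)^+ - (\gamma-\gamma^\dagger)^-, \qquad |\gamma-\gamma^\dagger| = (\gamma-\gamma^\dagger)^+ + (\gamma-\gamma^\dagger)^-. \]

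For case \eqref{fir}, I would write $F'[\gamma^\dagger]((\gamma-\gamma^\dagger)^-) = F'[\gamma^\dagger]((\gamma-\gamma^\dagger)^+) - F'[\gamma^\dagger](\gamma-\gamma^\dagger)$, apply the triangle inequality in $Y$, and use the hypothesis to get $\|F'[\gamma^\dagger]((\gamma-\gamma^\dagger)^-)\|_Y \leq (C+1)\|F'[\gamma^\dagger](\gamma-\gamma^\dagger)\|_Y$. Summing the bounds on the positive and negative parts then yields
\[ \|F'[\gamma^\dagger](|\gamma-\gamma^\dagger|)\|_Y \leq (2C+1)\|F'[\gamma^\dagger](\gamma-\gamma^\dagger)\|_Y, \]
which is \eqref{mjmi1} with constant $2C+1$. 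For case \eqref{fir1}, I would instead invoke the reverse triangle inequality,
\[ \|F'[\gamma^\dagger](\gamma-\gamma^\dagger)\|_Y \geq \|F'[\gamma^\dagger]((\gamma-\gamma^\dagger)^-)\|_Y - \|F'[\gamma^\dagger]((\gamma-\gamma^\dagger)^+)\|_Y \geq (1-\nu)\|F'[\gamma^\dagger]((\gamma-\gamma^\dagger)^-)\|_Y, \]
together with the hypothesis, to control both sign parts by $\|F'[\gamma^\dagger](\gamma-\gamma^\dagger)\|_Y$. Adding them gives $\|F'[\gamma^\dagger](|\gamma-\gamma^\dagger|)\|_Y \leq \tfrac{1+\nu}{1-\nu}\|F'[\gamma^\dagger](\gamma-\gamma^\dagger)\|_Y$, and since $\nu<1$ this constant is bounded by $\tfrac{3}{1-\nu}$, which is precisely what makes the stated smallness bound $\|\gamma-\gamma^\dagger\|_\infty\leq\tfrac{\theta_\eta}{3}(1-\nu)$ the right one.

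To finish, I would pass from \eqref{mjmi1} to \eqref{mjmi} in the standard way used throughout the paper: the pointwise bound $|\gamma-\gamma^\dagger|^2 \leq \|\gamma-\gamma^\dagger\|_\infty |\gamma-\gamma^\dagger|$ together with \eqref{uese} gives $\|F'[\gamma^\dagger](|\gamma-\gamma^\dagger|^2)\|_Y \leq \|\gamma-\gamma^\dagger\|_\infty \|F'[\gamma^\dagger](|\gamma-\gamma^\dagger|)\|_Y$; inserting the constant from the appropriate case and using the prescribed smallness of $\|\gamma-\gamma^\dagger\|_\infty$ produces \eqref{mjmi}, and Theorem~\ref{th:mjmi} then delivers the strong tangential cone condition with the given $\eta$. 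There is no real obstacle here — the argument is essentially bookkeeping on the $\pm$-decomposition — the only delicate point is keeping track of constants so that they match the thresholds $\tfrac{\theta_\eta}{2C+1}$ and $\tfrac{\theta_\eta}{3}(1-\nu)$ stated in the theorem, which in turn is why the slightly loose estimate $1+\nu\leq 2 \leq 3$ is used in case \eqref{fir1}.
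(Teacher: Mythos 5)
Your proposal is correct and follows essentially the same route as the paper: the $\pm$-decomposition, the triangle inequality to reach the constant $2C+1$ in case \eqref{fir}, and the factorization $\|F'[\gamma^\dagger](|\gamma-\gamma^\dagger|^2)\|_Y \leq \|\gamma-\gamma^\dagger\|_\infty\|F'[\gamma^\dagger](|\gamma-\gamma^\dagger|)\|_Y$ via \eqref{uese} to land in \eqref{mjmi}. The only cosmetic difference is in case \eqref{fir1}, where the paper reduces to case \eqref{fir} with $C=\tfrac{1}{1-\nu}$ (giving $\tfrac{3-\nu}{1-\nu}$) while you compute the slightly sharper constant $\tfrac{1+\nu}{1-\nu}$ directly; both are dominated by $\tfrac{3}{1-\nu}$ and hence compatible with the stated threshold $\tfrac{\thet}{3}(1-\nu)$.
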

\begin{proof}
Set $p(x) = (\gamma-\gamma^\dagger)^+(x)$ and $n(x) = (\gamma-\gamma^\dagger)^+(-)$, then 
$\gamma - \gamma^\dagger = p -n$ and $(\gamma - \gamma^\dagger)^2 = p^2 + n^2.$
%Assume that the estimate with $(\gamma-\gamma^\dagger)^+(x)$ holds. 
 %With the shortcut  $G:= F'[\gamma^\dagger]$, 
 We have 
\begin{align*} & \|F'[\gamma^\dagger](|\gamma-\gamma^\dagger|) \|_Y\leq 
\|F'[\gamma^\dagger](p + n)\|_Y \leq 
\|F'[\gamma^\dagger] p\|_Y + \| F'[\gamma^\dagger] n\|_Y \\
&\leq 
 \|F'[\gamma^\dagger] p\|_Y + \| F'[\gamma^\dagger] (n -  p) \|_Y + \|F'[\gamma^\dagger] p\|_Y\\
 &  \leq 
2 C \|F'[\gamma^\dagger]( p-n)\|_Y + \| F'[\gamma^\dagger] (n -  p) \|_Y
\leq (2 C+1)  \|F'[\gamma^\dagger](\gamma-\gamma)\|_Y. \end{align*} 
Thus, 
\[ \|F'[\gamma^\dagger]((\gamma-\gamma)^2)\|_Y \leq 
\|\gamma-\gamma^\dagger\|_\infty (2 C+1)  \|F'[\gamma^\dagger](\gamma-\gamma)\|_Y 
\] 
 and \eqref{mjmi} holds.  
 In case of \eqref{fir1}, we estimate 
 \begin{align*}  &\|F'[\gamma^\dagger](\gamma-\gamma^\dagger)\|_Y = 
 \|F'[\gamma^\dagger] p - F'[\gamma^\dagger] n \|_Y \geq \left| \|F'[\gamma^\dagger]p\|_Y - 
 \|F'[\gamma^\dagger]n\|_Y\right| \\
 &\geq 
 (1- \nu) \|F'[\gamma^\dagger] p\|_Y\, .
 \end{align*}
 Hence \eqref{fir} holds with $C  = \frac{1}{1-\nu}$. The result then follows from 
 the first part with $\nu \geq 0$. 
\end{proof}

Finally, as the most constructive result, we establish the 
local tangential cone conditions for $C^2$-conductivities with ``unbalanced''   positive and negative
part.

\begin{theorem}
Assume that $\|\gamma-\gamma^\dagger\|_{C^2(\Omega)} \leq C_1$
and $\|(\gamma-\gamma^\dagger)\|_\infty \leq C_2 < \frac{\thet}{3} $
with $\thet$ from Theorem~\ref{th:mjmi} given some $\eta \leq 1$. 
There is  a nonnegative nondecreasing function $\psi$  such that if 
\begin{align} 
  \|(\gamma-\gamma^\dagger)^-\|_\infty &\leq \psi(\|(\gamma-\gamma^\dagger)\|_\infty)  \label{twotwo}
\end{align} 
holds (or with the roles of $+$ and $-$ swapped), then  the 
strong tangential cone condition is satisfied with this $\eta$.   
\end{theorem}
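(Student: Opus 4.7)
The plan is to apply Theorem~\ref{th:six} via its condition~\eqref{fir1}, with the roles of $(\cdot)^+$ and $(\cdot)^-$ swapped: the goal under the hypothesis~\eqref{twotwo} is to produce a constant $\nu<1$ such that
\[
\|F'[\gamma^\dagger](\gamma-\gamma^\dagger)^-\|_Y \,\leq\, \nu\,\|F'[\gamma^\dagger](\gamma-\gamma^\dagger)^+\|_Y.
\]
Since $\|\gamma-\gamma^\dagger\|_\infty \leq C_2 < \thet/3$, I would fix once and for all $\nu := 1 - 3C_2/\thet \in (0,1)$, which automatically ensures the accompanying smallness constraint $\|\gamma-\gamma^\dagger\|_\infty \leq (\thet/3)(1-\nu)$ demanded by Theorem~\ref{th:six}. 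Once the displayed inequality is verified, Theorem~\ref{th:six} yields~\eqref{mjmi}, and Theorem~\ref{th:mjmi} then delivers the strong tangential cone condition with the prescribed $\eta$.

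For the upper estimate on the left-hand side I would use that $F'[\gamma^\dagger]$ is a bounded linear operator $L^\infty(\Omega)\to Y$, immediate from~\eqref{defFp} together with standard elliptic bounds; denoting its operator norm by $L$, this gives $\|F'[\gamma^\dagger](\gamma-\gamma^\dagger)^-\|_Y \leq L\,\|(\gamma-\gamma^\dagger)^-\|_\infty$. For the lower estimate on the right-hand side, the decomposition $(\gamma-\gamma^\dagger)^+ = (\gamma-\gamma^\dagger) + (\gamma-\gamma^\dagger)^-$ combined with the triangle inequality gives
\[
\|F'[\gamma^\dagger](\gamma-\gamma^\dagger)^+\|_Y \geq \|F'[\gamma^\dagger](\gamma-\gamma^\dagger)\|_Y - L\,\|(\gamma-\gamma^\dagger)^-\|_\infty,
\]
so the whole scheme reduces to producing a quantitative lower bound on $\|F'[\gamma^\dagger](\gamma-\gamma^\dagger)\|_Y$ in terms of $\|\gamma-\gamma^\dagger\|_\infty$. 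This is where the $C^2$-bound enters crucially: I would introduce
\[
h(s) := \inf\{\|F'[\gamma^\dagger] w\|_Y \,:\, w\in C^2(\Omega),\ \|w\|_{C^2(\Omega)}\leq C_1,\ \|w\|_\infty \geq s\},
\]
which is manifestly nondecreasing in $s$, and argue $h(s)>0$ for every $s>0$ by Arzel\`a--Ascoli compactness of the $C^2$-ball in $L^\infty(\Omega)$, continuity of $w\mapsto \|F'[\gamma^\dagger]w\|_Y$ on $L^\infty$, and injectivity of $F'[\gamma^\dagger]$ on $L^\infty(\Omega)$. Applied to $w=\gamma-\gamma^\dagger$, this yields $\|F'[\gamma^\dagger](\gamma-\gamma^\dagger)\|_Y \geq h(\|\gamma-\gamma^\dagger\|_\infty)$.

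Combining everything, a sufficient condition for the swapped~\eqref{fir1} becomes
\[
L\,\|(\gamma-\gamma^\dagger)^-\|_\infty \,\leq\, \nu\bigl(h(\|\gamma-\gamma^\dagger\|_\infty) - L\,\|(\gamma-\gamma^\dagger)^-\|_\infty\bigr),
\]
equivalently $\|(\gamma-\gamma^\dagger)^-\|_\infty \leq \tfrac{\nu}{(1+\nu)L}\,h(\|\gamma-\gamma^\dagger\|_\infty)$. Setting
\[
\psi(s) := \frac{\nu}{(1+\nu)L}\,h(s)
\]
produces a nonnegative nondecreasing function that is strictly positive on $(0,C_2]$, and for which~\eqref{twotwo} exactly matches the sufficient condition above, closing the argument. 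The principal obstacle I anticipate is the compactness--injectivity step underlying $h(s)>0$: one must know that the kernel of $F'[\gamma^\dagger]$ meets the $C^2$-ball only at the origin, which is classical for the linearized Calder\'on problem under sufficiently rich Dirichlet data $(f_i)_{i\in I}$ but would need to be explicitly postulated in settings with only finitely many measurements.
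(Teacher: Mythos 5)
Your overall scaffolding matches the paper's: you invoke Theorem~\ref{th:six} through the swapped version of \eqref{fir1}, choose $\nu = 1-3C_2/\thet$ so that the smallness constraint $\|\gamma-\gamma^\dagger\|_\infty \le \tfrac{\thet}{3}(1-\nu)=C_2$ is automatic, and bound $\|F'[\gamma^\dagger](\gamma-\gamma^\dagger)^-\|_Y \le L\|(\gamma-\gamma^\dagger)^-\|_\infty$ from above. Where you diverge is the lower bound on $\|F'[\gamma^\dagger](\gamma-\gamma^\dagger)^+\|_Y$: you pass through the triangle inequality to $\|F'[\gamma^\dagger](\gamma-\gamma^\dagger)\|_Y$ and then need a quantitative lower bound $h(s)$ on the image of the \emph{full} (sign-changing) difference, which forces you to assume injectivity of $F'[\gamma^\dagger]$ on the $C^2$-ball. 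You flag this yourself as an obstacle, and it is a genuine gap relative to the theorem as stated: injectivity of the linearized map is not among the hypotheses, and the paper's setup explicitly allows finitely many Dirichlet data $(f_i)_{i\in I}$ that need not be complete, in which case $F'[\gamma^\dagger]$ has a large kernel, $h(s)$ can vanish, and your $\psi$ degenerates to the trivial function (so the statement is only recovered in the vacuous monotone case $(\gamma-\gamma^\dagger)^-=0$).

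The paper avoids this entirely by never leaving the cone of nonnegative perturbations. It locates an interior maximum point $x_0$ of $(\gamma-\gamma^\dagger)^+$ with value $M=\|(\gamma-\gamma^\dagger)^+\|_\infty$, uses the $C^2$-bound to get the pointwise estimate $(\gamma-\gamma^\dagger)^+ \ge \tfrac{M}{2}\,\chi_{B}$ on a ball $B$ of radius controlled by $M/C_1$, and then applies the L\"owner-order monotonicity \eqref{uese} to conclude $\|F'[\gamma^\dagger](\gamma-\gamma^\dagger)^+\|_Y \ge \tfrac{M}{2}\,\kappa(M/C_1)$ with $\kappa(m)=\inf_{B_m(x_0)\subset\Omega}\|F'[\gamma^\dagger]\chi_{B_m}(x_0)\|$; the resulting $\psi(M)=\tfrac{\nu}{2L}M\kappa(M/C_1)$ is nontrivial as soon as $F'[\gamma^\dagger]$ does not annihilate the characteristic function of a ball, which (since $\langle \Lambda_{\gamma^\dagger}'(\chi_B)f_i,f_i\rangle=\int_B|\nabla u_{\gamma^\dagger,f_i}|^2$) holds by unique continuation for a \emph{single} nontrivial Dirichlet datum. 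In short: the sign-definiteness of $F'[\gamma^\dagger]$ on nonnegative inputs is the key structural fact your argument discards when you apply the triangle inequality, and recovering it is exactly what lets the paper dispense with the injectivity postulate. To repair your proof you would either have to add injectivity as a hypothesis (weakening the theorem) or replace the $h(s)$ step by the pointwise-lower-bound-plus-monotonicity argument, at which point you have reproduced the paper's proof.
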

\begin{proof}
Without loss of generality we assume that 
$\|(\gamma-\gamma^\dagger)^+\|_\infty > \|(\gamma-\gamma^\dagger)-\|_\infty $ and 
hence that $\|(\gamma-\gamma^\dagger)\|_\infty = \|(\gamma-\gamma^\dagger)^+\|_\infty$
The case that that negative and positive part have equal norm is ruled out by the 
assumptions of the theorem.  
Let $x_0$ be a point in $\Omega$, where the maximum $M$ of 
$\delta \gamma: = (\gamma-\gamma^\dagger)^+$, 
 is attained. Then $\delta \gamma'(x_0) = 0$, 
and with the $C^2$-bound we may find an estimate 
\[ (\gamma-\gamma^\dagger)^+(x) \geq M-  \frac{C_1}{2}\|x-x_0\|^2 \]
Thus for $\|x-x_0\|^2 \leq \frac{M}{C_1}$, we have that 
$(\gamma-\gamma^\dagger)^+(x) \geq \frac{M}{2}$. By the monotonicity result in \eqref{uese}
 it follows 
that 
\[ \|F'[\gamma^\dagger](\gamma-\gamma)^+(x) \|_Y \geq 
\frac{\|(\gamma-\gamma^\dagger)^+\|_\infty }{2} 
\|F'[\gamma^\dagger]\chi_{B_\frac{M}{C_1}(x_0)} \|_Y \]
where $B_r(x_0)$ is the unit ball with center $x_0$ and radius $r$ and 
$\chi$ denotes the characteristic function. 
Define 
\[ \kappa(m):= \inf_{B_m(x_0) \subset \Omega}  \|F'[\gamma^\dagger]\chi_{B_m}(x_0) \|. \]
This defines a nonnegative and nondecreasing function. 
%
%\geq \|(\gamma-\gamma^\dagger)^+\|_\infty  C(M,C_1) \]
Thus, 
\[  \|F'[\gamma^\dagger](\gamma-\gamma)^+(x) \|_Y \geq
\frac{M}{2} \phi(\tfrac{M}{C_1}).  \]  
Define $\nu:= 1-\frac{3 C_2}{\thet}$. By assumption in the theorem, $0 < \nu < 1$. 
We let 
\[ \psi(M):= \frac{1}{2 L} M \kappa(\frac{M}{C_1}) \nu. \]
%By the assumption we have that $0 \leq \nu(\|(\gamma-\gamma^\dagger)^+\|_\infty) <1 $.
If \eqref{twotwo} is  satisfied with this $\psi$, then with $M = \|(\gamma-\gamma^\dagger)^+\|_\infty$, we have 
\begin{align*} 
&\|F'[\gamma^\dagger](\gamma-\gamma)^-(x) \|_Y \leq 
L \|(\gamma-\gamma)^-\|_{\infty} \leq 
\psi(\|(\gamma-\gamma)^+\|_{\infty}) \\ 
& \leq \nu
\frac{M}{2} \kappa(\frac{M}{C_1}) \leq \nu  \|F'[\gamma^\dagger](\gamma-\gamma)^+(x) \|_Y.
\end{align*} 
Thus \eqref{fir1} holds and since 
\[  \frac{\thet}{3}(1-\nu) = C_2 \geq \|(\gamma-\gamma)\|_{\infty},  \]
the result follows from Theorem~\ref{th:six} .
\end{proof} 
The relevance of this result is that we have the tangential cone condition 
satisfied in a $L^\infty$-ball if the positive (or negative ) part of 
the difference of conductivities dominates the negative (resp. positive) part.

\begin{remark}\rm 
The last theorem might explains to some extend the 
 convergence behaviour of the 
Landweber iteration for the impedance tomography problem. Fact is that the cone conditions 
are only required for the iterates $\gamma_k$ and the true conductivity $\gamma^\dagger$. 
In many numerical experiments, the initial value for the iteration is often chosen as being 
strictly below the true conductivity, for instance, if $\gamma^\dagger$ correspond to inclusions 
that have  higher conductivities than the background one, and naturally the initial 
values of the iteration are taken as that background. 
Then by the last theorem, respectively, Corollary~\ref{cortwo}, 
the cone condition is satisfied for the initial iterate, 
and the iteration will stay at least bounded.
 This will also be true for a certain number of 
the following iterations. 
 However, it is not guaranteed that this will hold for all iterations. 
In a lucky case, a monotonicity property will hold up to a stopping index, and then the 
iteration appear as convergent. However, in an unlucky case the cone condition might get 
violated with the effect that the iterates can diverge even though the stopping criteria is 
not yet met. This effect, may mistakenly be regarded as semiiteration, i.e., divergence by
data error, although this has nothing to do with noisy data.   

For a fair investigation of the convergence of the Landweber method, it would 
be interesting to start with a $\gamma$ that has values below and above of $\gamma^\dagger$ on a 
high number of regions, e.g., $\gamma_0 = \gamma^\dagger  + \text{highly oscillatory}$. 
An interesting question is  whether the iterates of the 
Landweber method would still remain bounded in this case. 
\end{remark}

%\newpage 
%\newpage 

\bibliographystyle{siam}
\bibliography{MainEIT}
\end{document}